\documentclass[12pt]{amsart}

\usepackage{amsmath,amsfonts,latexsym,graphicx,amssymb,url}
\usepackage{hyperref,pdfsync}
\usepackage{amsmath,txfonts,pifont,bbding,pxfonts,manfnt}
\usepackage[active]{srcltx}
\usepackage{wasysym,pstricks}

\setlength{\headheight}{15pt} \setlength{\topmargin}{10pt}
\setlength{\headsep}{30pt} 
\setlength{\textwidth}{15cm} \setlength{\textheight}{21.5cm}
\setlength{\oddsidemargin}{1cm} 
\setlength{\evensidemargin}{1cm} 

\numberwithin{equation}{section}

\newcommand{\diam}{\text{{\rm diam}}}

\newcommand{\R}{{\mathbb R}} 

\def \H {\mathcal{H}}
\def \bd {{\emph{bdry}}}





\newtheorem{Theorem}{Theorem}[section]

\newtheorem{Lemma}[Theorem]{Lemma}
\newtheorem{Definition}[Theorem]{Definition}



\begin{document}
\title{$C^{1,\alpha}$ estimates for the parallel refractor}
\author[F. Abedin, C. E. Guti\'errez,
and G. Tralli]{Farhan Abedin, Cristian E. Guti\'errez, and Giulio Tralli}
\date{\today}
\thanks{C. E. G was partially supported
by NSF grant DMS--1201401. G. T. wishes to thank the Department of Mathematics of Temple University for the kind hospitality during his visit during the Spring of 2015, when this work started.}
\maketitle

\begin{abstract}
We consider the parallel refractor problem when the planar radiating source lies in a medium having higher refractive index than the medium in which the target is located. 
We prove local $C^{1,\alpha}$ estimates for parallel refractors under suitable geometric assumptions on the source and target, and under local regularity hypotheses on the target set.
We also discuss existence of refractors under energy conservation assumptions.
\end{abstract}

\setcounter{equation}{0}
\section{Introduction}

Suppose we have a domain $\Omega\subset \R^n$ and 
a domain $\Sigma$ contained in an $n$ dimensional surface in $\R^{n+1}$; here,
$\Omega$ denotes the extended source, and $\Sigma$ denotes the target domain, receiver, or screen to be illuminated.
Let $n_1$ and $n_2$ be the indices of refraction
of two homogeneous and isotropic media I and II, respectively.
Suppose from the extended source $\Omega$, surrounded by medium I,
radiation emanates in the vertical direction $e_{n+1}$ with intensity $f(x)$
for $x\in \Omega$, and the target $\Sigma$ is surrounded by medium II.
That is, all emanating rays from $\Omega$ are collimated.
A {\it parallel refractor} is an optical surface
$\mathcal R$, 
interface between media I and II,
such that all rays refracted by $\mathcal R$ into medium II
are received at the surface $\Sigma$ with prescribed radiation intensity $\sigma(p)$
at each point $p\in \Sigma$.
Assuming no loss of energy in this process, we have the conservation of energy equation
$\int_{\Omega}f(x)\,dx=\int_{\Sigma}\sigma(p)\,dp$.

When medium II is denser than medium I (i.e. $n_1<n_2$), $C^{1,\alpha}$ estimates are proved in \cite{gutierrez-tournier:regularityparallelrefractor}, and the existence of refractors is proved in \cite{gutierrez-tournier:parallelrefractor}. The purpose of this paper is to consider the case when $n_1>n_2$. This has interest in the applications to lens design since lenses are typically made of a material having refractive index larger than the surrounding medium. 
In fact, if the material around the source is cut out with a plane parallel to the source, then the lens sandwiched  between that plane and the constructed refractor surface will perform the desired refracting job.
When $n_1>n_2$ the geometry of the refractors is different than when $n_1<n_2$; in fact, the geometry is determined by hyperboloids instead of ellipsoids. 
In addition, in the case $n_1>n_2$, total internal reflection can occur and one needs additional geometric conditions on the relative configuration between the source and the target so that the target is reachable by the refracted rays.
To obtain existence and regularity of refractors when $n_1>n_2$, the use of hyperboloids requires non-trivial changes in some of the arguments used in \cite{gutierrez-tournier:regularityparallelrefractor} when $n_1<n_2$. The main differences are in the set up of the problem, in the arguments to obtain global support from local support, Section \ref{sec:localtoglobal}, and in the proof of existence. 
Our results are local; that is, we only need to assume local conditions in a neighborhood of a point in the extended source and the target.
The main result of the paper is Theorem \ref{thm:Holdercontinuityresult} where $C^{1,\alpha}$ estimates are proved. 
We remark that most results do not involve the energy distribution given in the source and target,
and conservation of energy is only used to prove existence in Theorem \ref{thm:existencediscretecase}.
For instance, the fact that local refractors are global, Theorem \ref{localtoglobal}, just follows from the geometric assumptions in Section \ref{asstar}; see condition \eqref{eq:AWcondition}.
In addition, Theorem \ref{lowerboundtheorem} only requires geometric assumptions.
Properties of the target measure are necessary only to obtain the H\"older estimates, Theorem \ref{thm:Holdercontinuityresult}. 
Our results are structural, in the sense that they only depend on the geometric conditions assumed and do not depend on the smoothness of the measures given in the source and target.

Problems of refraction have generated interest recently for the applications to design free form lenses and also for the various mathematical tools developed to solve them. 
For example, the far field point source refractor problem is solved in \cite{gutierrez-huang:farfieldrefractor} using mass transport. The near field point source refractor problem is considered in \cite{gutierrez-huang:nearfielrefractorermanno6thbirth} and \cite{gutierrez-huang:nearfieldrefractor}. More general models taking into account losses due to internal reflection are in \cite{gutierrez-mawi:refractorwithlossofenergy}. 
Numerical methods have been developed in \cite{Brix15} and \cite{Caffarelli-Oliker:weakantenna} for the actual calculation of reflectors, and recently in \cite{deleo-gutierrez-mawi:farfieldnumerical} for the numerical design of far field point source refractors. A significant amount of work has also been done to obtain results on the regularity of reflectors and refractors \cite{caffarelli-gutierrez-huang:antennaannals, loeper2011:regularityonthesphere, karakhanyan-wang:nearfieldreflector, karakhanyan2014:collimatedreflector, karakhanyan2016parallellighting, guillen2015pointwise}.

The organization of the paper is as follows. Section \ref{sec:definitionsandpreliminaries} contains results concerning estimates of hyperboloids of revolution. The precise definition of refractor is in Section \ref{condi}, and the structural assumptions on the target that avoid total reflection are in Section \ref{subsec:structuralassumptionsontarget}.
The derivatives estimates needed for hyperboloids are in Section \ref{subsec:derivativeestimates}.
Section \ref{asstar} contains assumptions on the target modeled on the conditions introduced by 
Loeper in the seminal work \cite[Proposition 5.1]{loeper:actapaper}. 
In Section \ref{sec:localtoglobal}, using the geometry of the hyperboloids, we prove that if a hyperboloid supports a parallel refractor 
locally, then it supports the refractor globally provided the target satisfies the local condition \eqref{eq:AWcondition}.
This resembles the condition (A3) of Ma, Trudinger and Wang \cite{MaTrudingerWang:regularityofpotentials} introduced in the context of optimal mass transport.
The main results are in Section \ref{main}, in particular, Section \ref{subsec:holderestimatesfromgrowthconditions} contains the proof of the H\"older estimates.
Finally in the Appendix, Section \ref{app}, we discuss and establish the existence of refractors satisfying the energy conservation condition \eqref{eq:energyconservation}.

\setcounter{equation}{0}
\section{Definitions and Preliminary Results}\label{sec:definitionsandpreliminaries}

We briefly review the process of refraction.
Points in $\R^{n+1}$ will be denoted by $X=(x,x_{n+1})$. We consider parallel rays traveling in the unit direction $e_{n+1}$. Let $T$ be a hyperplane with outward pointing unit normal $N$ and $X\in T$. We assume that medium $I$ is located in the region below $T$ and media $II$ in the region above $T$. In such a scenario, a ray of light emanated from $\Omega$ in the direction $e_{n+1}$ strikes $T$ at $X$ and, by Snell's Law of Refraction, gets refracted in the unit direction 
$$
\Lambda=\kappa\, e_{n+1} +\delta N, \quad \text{with}\quad 
\delta=
-\kappa\,e_{n+1}\cdot N  +\sqrt{1+\kappa^{2}\,\left((e_{n+1}\cdot N)^{2}-1\right)},$$ 
where $\delta<0$ since $\kappa>1$. The refracted ray is $X+s\Lambda$, for $s>0$; see Figure \ref{fig:snell}.
In particular, if $v\in \R^{n}$ and the hyperplane $T$ is so that the unit upper normal $N=\dfrac{(-v,1)}{\sqrt{1+|v|^2}}$, then $\delta=\dfrac{-\kappa+\sqrt{1-(\kappa^2-1)|v|^2}}{\sqrt{1+|v|^2}}$ and the refracted unit direction is 
\begin{equation}\label{eq:defofLambdav}
\Lambda (v):=\left(-\dfrac{\delta}{\sqrt{1+|v|^2}}\, v, \dfrac{\delta}{\sqrt{1+|v|^2}}+\kappa \right):=(Q(v)v,-Q(v)+\kappa),
\end{equation}
with $Q(v)^{2}|v|^{2}+(Q(v)-\kappa)^{2}=1$. With this notation we have $Q>0$.

Since medium $I$ is more dense than medium $II$, total internal reflection can occur, \cite[Sect. 1.5.4]{book:born-wolf}. To avoid this we assume $e_{n+1}\cdot \Lambda(v)\geq n_2/n_1$, or equivalently, $e_{n+1}\cdot N\geq \sqrt{1-\kappa^{-2}}$; see \cite[Lemma 2.1]{gutierrez-huang:farfieldrefractor} where $n_1$ and $n_2$ are reversed. 

\begin{figure}
  \centering
\includegraphics[width=2in]{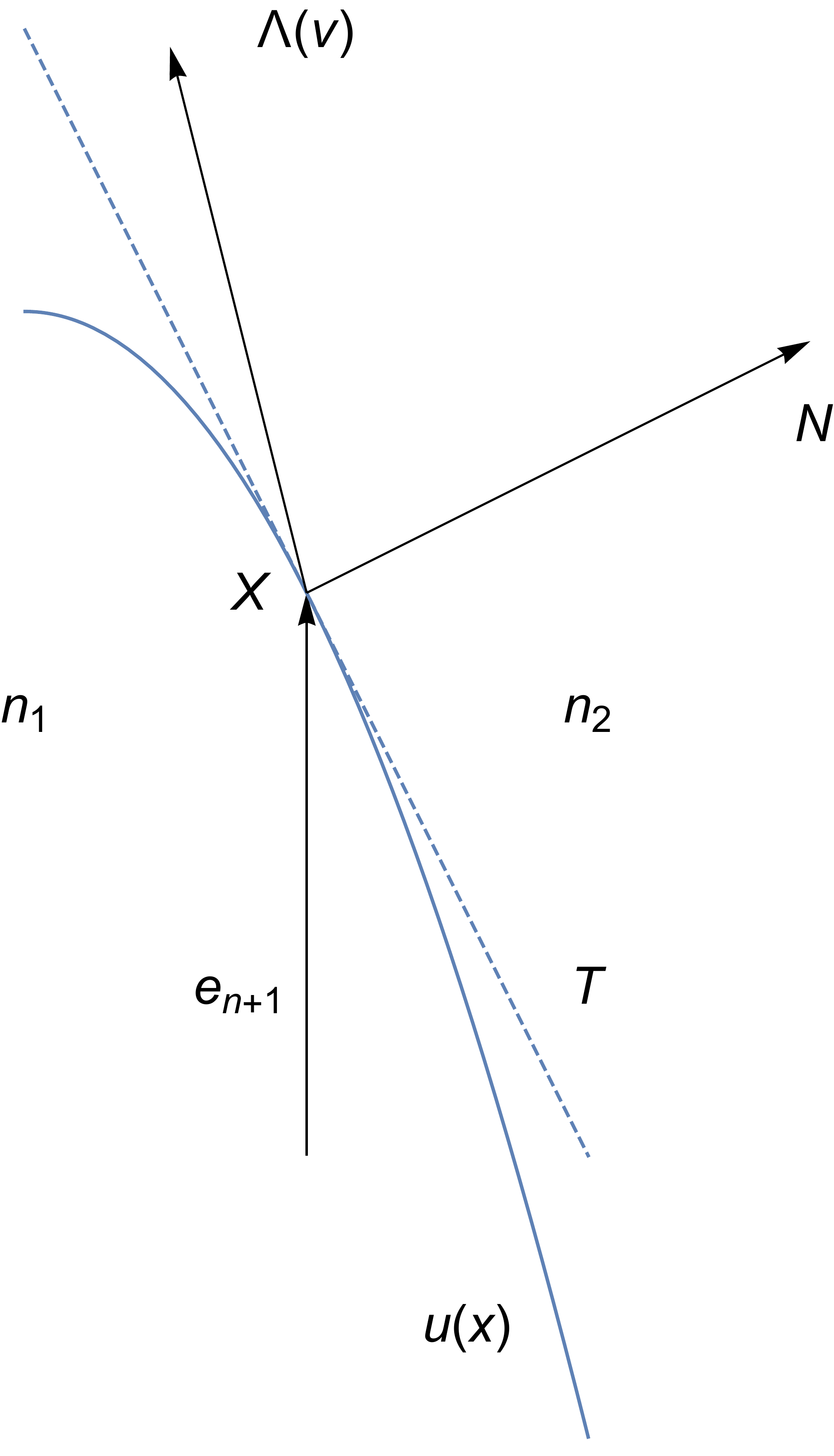}
\caption{Snell $n_1>n_2$}
\label{fig:snell}
\end{figure}


\subsection{Hyperboloids}\label{fixing}

Fix $b>0$. 
A two-sheeted hyperboloid in $\R^{n+1}$ with upper focus at $Y=(y,y_{n+1})$ and lower focus at $Y'=\left(y,y_{n+1}-\dfrac{2\kappa\,b}{\kappa^2-1} \right)$ has equation
\[
\dfrac{\left(x_{n+1}-\left(y_{n+1}+\dfrac{\kappa\,b}{\kappa^2-1}\right) \right)^2}{\left( \dfrac{b}{\kappa^2-1}\right)^2}-\dfrac{|x-y|^2}{\left(\dfrac{b}{\sqrt{\kappa^2-1}}\right)^2}=1.
\]
The semi-axis with direction $y_{n+1}$ is $\dfrac{b}{\kappa^2-1}$, the semi-axis with direction $y$ is $\dfrac{b}{\sqrt{\kappa^2-1}}$, and the center of symmetry is the point 
$\left(y,y_{n+1}-\dfrac{\kappa\,b}{\kappa^2-1} \right)$. Moreover, the upper vertex is 
$\left(y,y_{n+1}-\dfrac{b}{\kappa+1} \right)$, and the lower vertex is 
$\left(y,y_{n+1}-\dfrac{b}{\kappa-1} \right)$. Hence the distance between the foci is
\[
2c=\dfrac{2\kappa\,b}{\kappa^2-1},
\] 
and the distance between the vertices is 
\[
2a=\dfrac{2b}{\kappa^2-1}.
\] 
By definition, the eccentricity is $\dfrac{c}{a}$, and so the eccentricity equals $\kappa$.
The lower sheet (facing downwards) of the hyperboloid is given by
\begin{equation}\label{polareqn}
\H(Y,b) := \left\{X=(x,x_{n+1}) \in \mathbb{R}^{n+1} : \kappa (y_{n+1} - x_{n+1}) - |X-Y| = b \right\},
\end{equation}
which can be written as the graph of the function
\begin{equation}\label{hyperboloid}
\phi(x) := \phi_{Y,b}(x):=y_{n+1}-\dfrac{\kappa\,b}{\kappa^2-1}-\sqrt{\left(\dfrac{b}{\kappa^2-1}\right)^2 + \dfrac{|x-y|^2}{\kappa^2-1}}.
\end{equation}
%

Suppose the region above $\H(Y,b)$ has refractive index $n_2$, and the region below $\H(Y,b)$ has refractive index $n_1$, with $\kappa=n_1/n_2>1$. Then we have from \cite[Section 2.2]{gutierrez-huang:farfieldrefractor} and the reversibility of optical paths that each ray with direction $e_{n+1}$ striking from below the graph of $\phi$ at the point $X=(x,\phi(x))$ is refracted into a ray passing through the upper focus $Y$; see Figure \ref{fig:refractioninhyperboloid}. Therefore, $Y$ lies along the ray $X+s\Lambda(v)$ with $v=D\phi(x)$, with $\Lambda(v)$ given by \eqref{eq:defofLambdav}. Conversely, 
\begin{eqnarray}\label{Fact}
&\,&\mbox{if $X=(x,\phi(x))$ and the focus $Y$ can be written as $Y=X+s\,\Lambda(v)$}\\
&\,&\mbox{for some $s>0$ and $v\in \R^{n}$, then $v=D\phi(x)$;}\nonumber
\end{eqnarray}
a fact that will be used on multiple occasions throughout this paper.

\begin{figure}
  \centering
\includegraphics[width=2.5in]{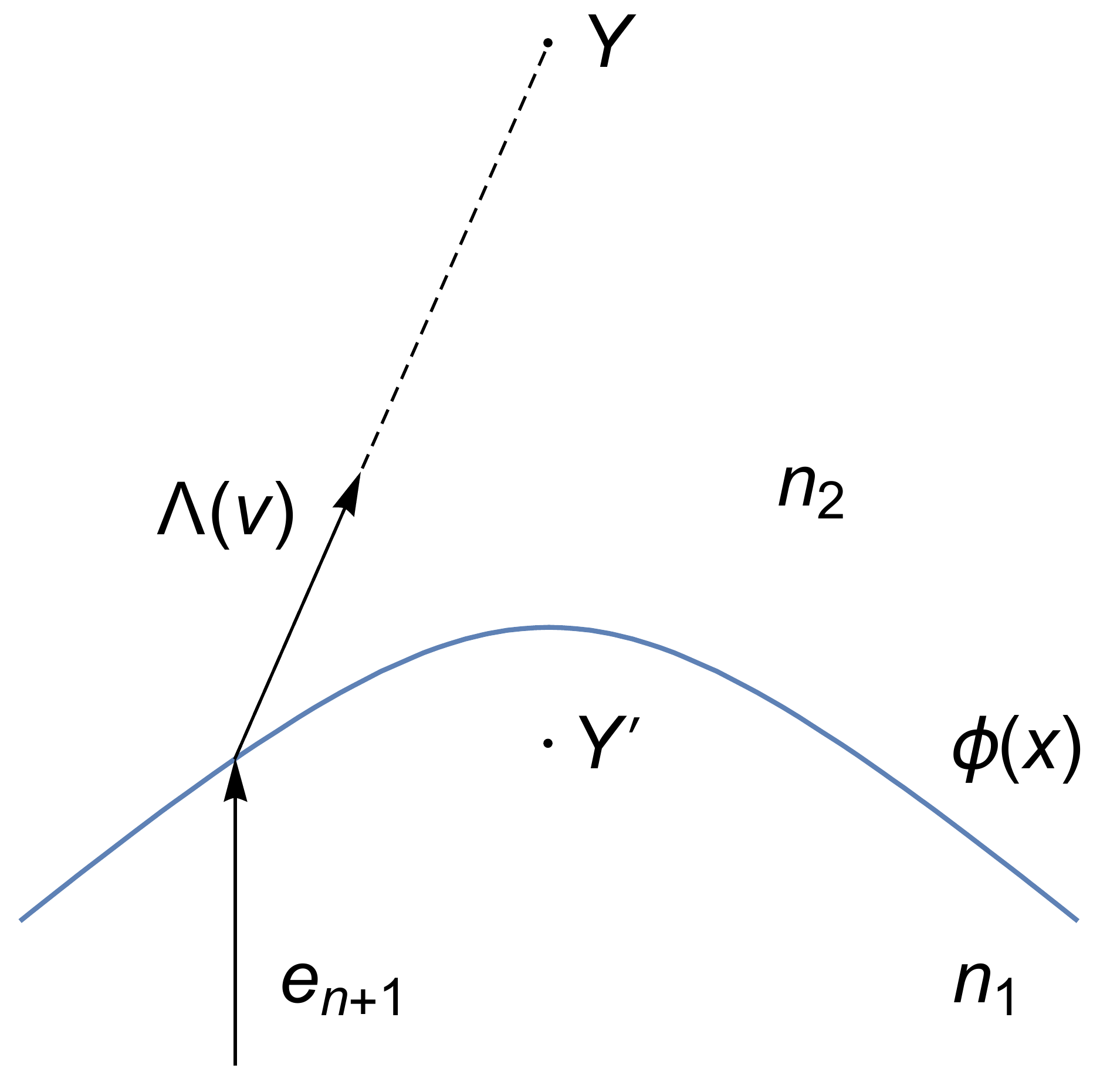}
\caption{Refraction in the hyperboloid}
\label{fig:refractioninhyperboloid}
\end{figure}

\vskip .02in

Given $X,Y\in \R^{n+1}$, let us define 
\begin{equation}\label{eq:definitionofc(X,Y)}
c(X,Y):=\kappa(y_{n+1}-x_{n+1})-|X-Y|.
\end{equation}
If $c(X_0, Y)>0$, then $\H(Y,c(X_0,Y))$ is the unique lower sheet of a two-sheeted hyperboloid with upper focus at $Y$ passing through $X_0$, and it is thus described by
\begin{equation}\label{eq:definitionofphi}
\phi(x,Y,X_0)= y_{n+1}-\frac{\kappa\,c(X_0,Y)}{\kappa^{2}-1}-\sqrt{\frac{c(X_0,Y)^{2}}{(\kappa^{2}-1)^{2}}+\frac{|x-y|^{2}}{\kappa^{2}-1}}.
\end{equation}
Notice that $\phi(x,Y,X_0)=\phi_{Y,c(X_0,y)}(x)$.

\subsection{Definition of refractor}\label{condi}

%
%
%

We are given a source domain $\Omega\subset \R^n=\R^n\times\{0\}$ surrounded by medium $n_1$ and a target $\Sigma$, a compact hypersurface in $\R^{n+1}_+=\{x_{n+1}>0\}$, surrounded by medium $n_2$, with $n_2<n_1$. 
Informally, a parallel refractor from $\Omega$ to $\Sigma$ is the graph of a function $u$ defined on $\Omega$ that refracts all vertical rays emanating from $\Omega$ into $\Sigma$. 
The hyperboloid $\mathcal H(Y,b)$ is said to support $u$ at the point $(x_0, u(x_0)), x_0 \in \Omega$, if there exist $b>0$ and $Y\in \Sigma$ such that $u(x)\geq \phi_{Y,b}(x)$ with equality at $x=x_0$. We will show that the existence of supporting hyperboloids depends on the relative positions between $\Omega$ and $\Sigma$; this will lead to a precise notion of refractor given in Definition \ref{def:refractor}.

Also from physical reasons, the refracting surface given by $u$ must be above the source $\Omega$: $u$ has thus to be positive in $\Omega$. 
This means that the supporting hyperboloids must satisfy
\[\phi_{Y,b}(x)>0\qquad \text{for all $x\in \Omega$ and for all $Y\in \Sigma$,}
\]
which immediately imposes a condition on $b$. In fact, first notice that from
\eqref{hyperboloid} we have
\begin{equation}\label{eq:maximumofphiYb}
\max_x \phi_{Y,b}(x)=\phi_{Y,b}(y)=y_{n+1}-\dfrac{b}{\kappa-1}.
\end{equation}
If $\phi_{Y,b}(\bar{x})>0$ at some $\bar{x}\in \Omega$, then we have $\phi_{Y,b}(y)>0$ that is
\begin{equation}\label{eq:firstconditionforb}
0<b<(\kappa-1)\,y_{n+1}.
\end{equation}
Fix $Y\in \Sigma$ and $b$ satisfying \eqref{eq:firstconditionforb}. By calculation we get that
\[
\{x\in \R^n:\phi_{Y,b}(x)> 0\}
=
B\left(y,\sqrt{\left(b-\kappa\,y_{n+1} \right)^2-y_{n+1}^2} \right).
\]
Since we need all the $\phi_{Y,b}$'s to be positive in $\Omega$, we want
\begin{equation}\label{eq:inclusionOmegaintoball}
\Omega\subset B\left(y,\sqrt{\left(b-\kappa\,y_{n+1} \right)^2-y_{n+1}^2} \right)\qquad \mbox{for all }Y=(y,y_{n+1})\in \Sigma.
\end{equation}
Notice that \eqref{eq:firstconditionforb} implies that the quantity inside the last square root is positive. Fixing $Y=(y,y_{n+1}) \in \Sigma$ and letting $\Delta_y=\diam\left(\Omega\cup \{y\} \right)$, \eqref{eq:inclusionOmegaintoball} is equivalent to
\begin{equation}\label{eq:conditionforDelta}
\Delta_y\leq \sqrt{\left(b-\kappa\,y_{n+1} \right)^2-y_{n+1}^2}
\end{equation}
which squaring imposes a condition on $b$, i.e.
\begin{equation}\label{eq:quadraticequationforb}
b^2 -2\,\kappa\,b\,y_{n+1}+(\kappa^2-1)\,y_{n+1}^2-\Delta_y^2\geq 0.
\end{equation}
The corresponding quadratic equation in $b$ has roots
\[
r_\pm=\kappa\,y_{n+1}\pm \sqrt{y_{n+1}^2+\Delta_y^2}.
\]
First observe that $r_->0$. Because there is $x_0\in \overline{\Omega}$ such that $\Delta_y=|x_0-y|$ and since $\phi_{Y,b}(x_0)\geq 0$ we obtain 
\[
y_{n+1}>\dfrac{\Delta_y}{\sqrt{\kappa^2-1}}
\]
which is equivalent to $r_->0$.
So to have the inclusion \eqref{eq:inclusionOmegaintoball} we must have from \eqref{eq:quadraticequationforb} that
\[
0<b<r_-\qquad \text{or} \qquad r_+<b.
\]
But from \eqref{eq:firstconditionforb} it is easy to see that $r_+<b$ is impossible.
So to have the inclusion \eqref{eq:inclusionOmegaintoball} we must have
\[
0<b<\min\{(\kappa-1)\,y_{n+1},r_-\}=r_-,
\]
that is,
\[
0<b<\kappa\,y_{n+1}- \sqrt{y_{n+1}^2+\Delta_y^2}.
\]
We now choose a uniform bound for $b$ in $y$. Let
\begin{equation}\label{def:bigDelta}
\Delta=\max_{y\in \pi(\Sigma)}\Delta_y
\end{equation}
where $\pi(\Sigma)$ is the projection onto $\R^n$ of the target $\Sigma$.
We require that
\begin{equation}\label{eq:uniformconditionforb}
0<b<\kappa\,y_{n+1}- \sqrt{y_{n+1}^2+\Delta^2}.
\end{equation}
For this to be well defined we need the right hand side to be positive, which means
\[
y_{n+1}>\dfrac{\Delta}{\sqrt{\kappa^2-1}}\qquad \text{for all $(y,y_{n+1})\in \Sigma$.}
\]
So we assume that the target satisfies the condition
\begin{equation}\label{eq:lowerboundforyn+1}
\inf_{Y=(y,y_{n+1})\in\Sigma}{y_{n+1}}>\dfrac{\Delta}{\sqrt{\kappa^2-1}.}
\end{equation}
We can now define refractor as follows.

\begin{Definition}\label{def:refractor}
Let $\Delta$ be defined by \eqref{def:bigDelta} and assume \eqref{eq:lowerboundforyn+1}.
The function $u:\Omega\to (0,+\infty)$ is a refractor from $\Omega$ to $\Sigma$ if for each $x_0\in \Omega$ there exists $Y=(y,y_{n+1})\in \Sigma$ and $0<b<\kappa\,y_{n+1}- \sqrt{y_{n+1}^2+\Delta^2}$ such that $u(x)\geq \phi_{Y,b}(x)$ for all $x\in \Omega$ with equality at $x=x_0$.
\end{Definition}

\subsection{Structural Assumptions on the Target}\label{subsec:structuralassumptionsontarget}

From here onwards, we will assume that $\Omega$ is convex, and $\Sigma\subset \mathbb{R}^n \times [\tau_1, \tau_1+\omega]$ for positive constants $\tau_1, \omega$; with $\tau_1$ to be chosen in a moment. Let $\tau_2=\tau_1+\omega$. By \eqref{eq:lowerboundforyn+1}, we require $\tau_1>\dfrac{\Delta}{\sqrt{\kappa^2-1}}$. We assume that the graph of our refractor $u$ is contained in the cylindrical region  $$\mathcal{C}_{\Omega}= \Omega \times [0,\tau_0]$$
for some $0<\tau_0<\tau_1$, that is, $u: \Omega \rightarrow [0,\tau_0]$; see Figure \ref{fig:figurefarfield} in the Appendix. Let us also suppose the following compatibility condition:
\begin{equation}\label{compatibilitycondition}
\tau_1 \geq \max \left\{\kappa \tau_0, \tau_0 + \frac{\kappa \Delta}{\kappa - 1} \right\}.
\end{equation}

In Appendix \ref{app}, we show under this configuration the existence of such a refractor. More precisely, we will prove that, for any $\kappa>1$, $\Delta>0$, $\omega>0$, one can choose $\tau_1>0$, sufficiently large, and $0<\tau_0<\tau_1$, both depending only on $\kappa, \Delta$ and $\omega$, such that \eqref{compatibilitycondition} holds and there exists a refractor $u$ in the sense of Definition \ref{def:refractor}; see Theorem \ref{thm:existencediscretecase} and the comment afterwards. In addition, the refractor constructed there satisfies the energy condition \eqref{eq:energyconservation}. 

Since $n_1>n_2$, total reflection can occur, \cite[Section 1.5.4]{book:born-wolf}. To avoid this, we require that the target $\Sigma$ satisfies 
\begin{equation}\label{eq:toavoidtotalreflection}
e_{n+1}\cdot \dfrac{Y-X}{|Y-X|}\geq \dfrac{n_2}{n_1}\qquad \forall X\in \mathcal C_\Omega,Y\in \Sigma.
\end{equation}
This means the following: if for each $X\in C_\Omega$ we consider the upward cones $\mathcal C_X$ with vertex at $X$ and opening $\phi:=\arccos (n_2/n_1)$, then \eqref{eq:toavoidtotalreflection} is equivalent to say that $\Sigma\subset \cap_{X\in \mathcal C_\Omega}\mathcal C_X$. If $X=(x,x_{n+1})\in \mathcal C_\Omega$, then $0\leq x_{n+1}\leq \tau_0$, and since the cones are vertical, we have $\mathcal C_{(x,\tau_0)}\subset \mathcal C_X$. Therefore 
\[
\cap_{x\in \Omega}\mathcal C_{(x,\tau_0)}\subset \cap_{X\in \mathcal C_\Omega}\mathcal C_X.
\]
If we assume $\Sigma\subset \cap_{x\in \Omega}\mathcal C_{(x,\tau_0)}:=S$, then \eqref{eq:toavoidtotalreflection} holds choosing $\Omega$ appropriately.
For example, if $\Omega=B_r(x_0)$ and we look at the cones $\mathcal C_{(x,\tau_0)}$ with $x\in B_r(x_0)$, we see that the set $S$ is a cone with the same opening $\phi$ and vertex at the point $(x_0,\tau_0+ h)$, with $h=r\tan (\pi/2 -\phi)=r\dfrac{\cos \phi}{\sin \phi}=r\dfrac{1}{\sqrt{\kappa^2-1}}$. If we choose $r$ sufficiently small such that $\tau_0+h=\tau_0+r\dfrac{1}{\sqrt{\kappa^2-1}}<\tau_1$, then $S$ intersected with the slab $\mathbb{R}^n \times [\tau_1, \tau_1+\omega]$ is non-empty. Therefore, taking a target $\Sigma\subset S\cap \mathbb{R}^n \times [\tau_1, \tau_1+\omega]$, there is no total reflection, that is, condition
\eqref{eq:toavoidtotalreflection} holds and $\Sigma$ satisfies the previous structural assumptions.


\subsection{Derivative Estimates for Hyperboloids}\label{subsec:derivativeestimates}

Let us first observe that hyperboloids are uniformly Lipschitz hypersurfaces. Indeed, a direct calculation shows that
$$D_x\phi(x,Y,X_0)=\dfrac{y-x}{\sqrt{c(X_0,Y)^2+(\kappa^2-1)|x-y|^{2}}}.$$
%
Therefore,
\begin{equation}\label{derivative}
\left|D_x\phi(x,Y,X_0)\right|\leq\min\left\{\dfrac{|y-x|}{c(X_0,Y)},\,\frac{1}{\sqrt{\kappa^2 - 1}} \right\}\leq 
\dfrac{1}{\sqrt{\kappa^2-1}}.
\end{equation}
Hence, by the definition of refractor, we conclude that 
\begin{equation}\label{lipschitzestimateforrefractor}
u(x_2) - u(x_1) \leq \phi(x_2, Y, X_2) - \phi(x_1, Y, X_2)\leq \frac{1}{\sqrt{\kappa^2 - 1}}|x_2 - x_1|\quad \mbox{for all }x_1,x_2\in\Omega.
\end{equation}
If we interchange the roles of $x_1$ and $x_2$, we get a uniform Lipschitz bound for the refractors.


The above argument suggests that obtaining higher derivative estimates for $\phi$ will allow us to obtain higher derivative estimates for $u$. We calculate below the relevant derivatives of $\phi$ that will be used. 
Fix $(x,Y,X_0)\in\Omega\times\Sigma\times \mathcal{C}_{\Omega}$, and put $x_{n+1}=\phi(x)$. 

For the derivative in $x^0_{n+1}$, we notice that 
$\dfrac{\partial}{\partial x_{n+1}}c(X,Y) = -\kappa - \dfrac{(x_{n+1} - y_{n+1})}{|X - Y|}$ and thus $\left|\dfrac{\partial c(X,Y)}{\partial x_{n+1}}\right| \leq \kappa + 1$. 
From \eqref{eq:definitionofphi}
\begin{align*}
\frac{\partial \phi}{\partial x^0_{n+1}}(x, Y, X_0) & =  - \left(\frac{\kappa}{\kappa^2 - 1}\right) \frac{\partial c(X_0,Y)}{\partial x^0_{n+1}}- \left(\frac{c(X_0,Y)^2}{(\kappa^2 - 1)^2} + \frac{|x-y|^2}{\kappa^2 - 1} \right)^{-\frac{1}{2}} \frac{c(X_0,Y)}{(\kappa^2 - 1)^2} \frac{\partial  c(X_0,Y)}{\partial x^0_{n+1}} \\
& =  - \left[\left(\frac{\kappa}{\kappa^2 - 1}\right) + \left(\frac{c(X_0,Y)^2}{(\kappa^2 - 1)^2} + \frac{|x-y|^2}{\kappa^2 - 1} \right)^{-\frac{1}{2}} \frac{c(X_0,Y)}{(\kappa^2 - 1)^2} \right] \frac{\partial c(X_0,Y)}{\partial x^0_{n+1}} ,
\end{align*}
and so we get
\begin{align}
\left|\frac{\partial \phi}{\partial x^0_{n+1}}(x, Y, X_0) \right| & \leq \left[\left(\frac{\kappa}{\kappa^2 - 1}\right) + \left(\frac{c(X_0,Y)^2}{(\kappa^2 - 1)^2} + \frac{|x-y|^2}{\kappa^2 - 1} \right)^{-\frac{1}{2}} \frac{c(X_0,Y)}{(\kappa^2 - 1)^2} \right] \bigg|\frac{\partial c(X_0,Y)}{\partial x^0_{n+1}} \bigg| \nonumber \\
& \leq \left[\left(\frac{\kappa}{\kappa^2 - 1}\right) + \frac{1}{\kappa^2 - 1} \right] (\kappa + 1)= \frac{\kappa + 1}{\kappa - 1}. \label{derivlastcoord}
\end{align}

Next we calculate the second derivatives and get, for $i,j = 1, \ldots, n$, that
\begin{eqnarray}\label{2derij}
&&\,\,\,\frac{\partial^2 \phi}{\partial x_i \partial x_j}(x, Y, X_0)=\\
&=&-\frac{1}{\kappa^2 - 1} \left(\frac{c(X_0,Y)^2}{(\kappa^2 - 1)^2} + \frac{|x-y|^2}{\kappa^2 - 1} \right)^{-\frac{3}{2}} \left\{\delta_{ij} \left(\frac{c(X_0,Y)^2}{(\kappa^2 - 1)^2} + \frac{|x-y|^2}{\kappa^2 - 1} \right) - \frac{(x_i - y_i)(x_j - y_j)}{\kappa^2 - 1} \right\}.\nonumber
\end{eqnarray}
This gives
\begin{equation}\label{hessian}
\left|\frac{\partial^2 \phi}{\partial x_i \partial x_j}(x, Y, X_0) \right| \leq \frac{2}{\sqrt{c(X_0,Y)^2 + (\kappa^2 - 1)|x-y|^2}} \leq \frac{2}{c(X_0,Y)}. 
\end{equation}
The mixed second derivatives in $x$ and $Y$ are, for $i = 1, \ldots n$ and $j = 1, \ldots, n+1$,
\begin{eqnarray*}
\frac{\partial^2 \phi}{\partial x_i \partial y_j}(x, Y, X_0) &=& \frac{\partial}{\partial y_j} \left(\frac{y_i - x_i}{\sqrt{c(X_0,Y)^2 + (\kappa^2 - 1)|x-y|^2}} \right)=\\
&=&\frac{\delta_{ij}}{\left(c(X_0,Y)^2 + (\kappa^2 - 1)|x-y|^2 \right)^{\frac{1}{2}}}+\\
&&-\,\frac{(y_i - x_i)\left(c(X_0,Y)\frac{\partial}{\partial y_j}c(X_0,Y) + (\kappa^2 - 1)(y_j - x_j)(1 - \delta_{j, n+1}) \right)}{\left(c(X_0,Y)^2 + (\kappa^2 - 1)|x-y|^2 \right)^{\frac{3}{2}}}.
\end{eqnarray*}
Since $\frac{\partial}{\partial y_j}c(X,Y) = \kappa \delta_{j, n+1} - \frac{(y_j - x_j)}{|X - Y|}$, we have $|\frac{\partial}{\partial y_j}c(X,Y)| \leq \kappa + 1$. Therefore,
\begin{align}
\left|\frac{\partial^2 \phi}{\partial x_i \partial y_j}(x, Y, X_0) \right| & \leq \frac{c(X_0,Y)^2 + (\kappa + 1)|x-y| c(X_0,Y) + 2(\kappa^2 - 1)|x-y|^2}{\left(c(X_0,Y)^2 + (\kappa^2 - 1)|x-y|^2 \right)^{\frac{3}{2}}} \nonumber \\
& \leq \left(2 + \frac{1}{2}\sqrt{\frac{\kappa + 1}{\kappa - 1}} \right)\frac{1}{c(X_0,Y)}. \label{mixedhessian}
\end{align}

It is evident from the above calculations that in order to bound the derivatives of $\phi$ in a uniform manner, we must obtain a positive lower bound for $c(X,Y)$ when $X\in C_\Omega$ and $Y\in \Sigma$. For this, we will use the structural assumption \eqref{compatibilitycondition}.
In fact, let $X = (x, x_{n+1}) \in \mathcal{C}_{\Omega}$ and $Y = (y, y_{n+1}) \in \Sigma$. 
Since $|x - y| \leq \Delta$, we first have
$$c(X,Y) = \kappa(y_{n+1} - x_{n+1}) - |X - Y| \geq \kappa(y_{n+1} - x_{n+1}) - \sqrt{\Delta^2 + (y_{n+1} - x_{n+1})^2}.$$
Next, if we let $\psi(t) = \kappa t - \sqrt{\Delta^2 + t^2}$, then $\psi(0) = - \Delta < 0$ and $\psi'(t) = \kappa - \frac{t}{\sqrt{\Delta^2 + t^2}} \geq \kappa - 1 > 0$. Thus, $\psi(t) \geq - \Delta + (\kappa - 1)t$ for all $t \geq 0$. It then follows that if $\gamma > 0$ is given, we have
\begin{equation}\label{lowerestimateforC}
\inf\limits_{X \in \mathcal{C}_{\Omega}, Y \in \Sigma} (y_{n+1} - x_{n+1}) \geq \frac{\gamma + \Delta}{\kappa - 1} \  \Rightarrow  \ \inf\limits_{X \in \mathcal{C}_{\Omega}, Y \in \Sigma} c(X,Y) \geq \gamma.
\end{equation}
From \eqref{compatibilitycondition}, we get
\begin{equation}
\tau_1 - \tau_0 \geq \frac{\kappa \Delta}{\kappa - 1}  = \frac{(\kappa - 1)\Delta + \Delta}{\kappa - 1}
\end{equation}
Since $\inf\limits_{X \in \mathcal{C}_{\Omega}, Y \in \Sigma} (y_{n+1} - x_{n+1}) \geq \tau_1 - \tau_0$, we then obtain that $\gamma = (\kappa - 1) \Delta$ is a lower bound for $c(X,Y)$. 
Clearly, this bound yields uniform bounds in 
\eqref{hessian} and \eqref{mixedhessian}, as well as for higher order derivatives.

We explicitly remark that the first order derivative bound in \eqref{derivative} is independent of the bounds for $c$, and thus independent of the compatibility assumptions. It depends just on the fact that the relevant supporting objects in our problem are hyperboloids, and it gives automatically global Lipschitz bounds for the refractor. This is in strong contrast with the case $\kappa<1$ considered in \cite{gutierrez-tournier:regularityparallelrefractor}. In fact, the supporting objects in \cite{gutierrez-tournier:regularityparallelrefractor} are ellipsoids and to obtain global Lipschitz bounds for them a condition between $\Omega$ and $\Sigma$ is needed, see \cite[Section 2.3]{gutierrez-tournier:regularityparallelrefractor}.


The derivative bounds and the properties of hyperboloids also imply the following estimates, which will be used in Section \ref{main}.

\begin{Lemma}\label{unposu} Let $\bar{X} \in \mathcal{C}_{\Omega}$, $Y \in \Sigma$, $x_0 \in \Omega$. Let also $X_0 = (x_0, \phi(x_0, Y, \bar{X}))$, and assume $X_0^* = (x_0, \phi(x_0, Y, \bar{X}) + h) \in \mathcal{C}_{\Omega}$ for some positive $h$. Then 
$$0 \leq \phi(x, Y, X_0^*) - \phi(x, Y, X_0) \leq \frac{\kappa + 1}{\kappa - 1}\, h \qquad\mbox{for all }x \in \Omega.$$
\end{Lemma}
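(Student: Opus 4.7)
The plan is to write $\phi(x, Y, X_0^*) - \phi(x, Y, X_0)$ as the integral of $\partial\phi/\partial x^0_{n+1}$ along the vertical segment joining $X_0$ to $X_0^*$, and then invoke the derivative bound \eqref{derivlastcoord}. Since $X_0^* - X_0 = h\,e_{n+1}$, the one-variable fundamental theorem of calculus yields
\[
\phi(x, Y, X_0^*) - \phi(x, Y, X_0) = \int_0^h \frac{\partial \phi}{\partial x^0_{n+1}}(x, Y, X_0 + t\,e_{n+1})\,dt,
\]
so both inequalities of the lemma will follow from a sign analysis and a uniform modulus bound on the integrand.

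For the sign, I would exploit the factorization $\phi(x, Y, X_0) = \phi_{Y, c(X_0,Y)}(x)$ from \eqref{eq:definitionofphi}, so by the chain rule $\partial\phi/\partial x^0_{n+1} = \partial_c \phi_{Y,c}(x)\cdot \partial_{x^0_{n+1}} c(X_0, Y)$. A direct differentiation of \eqref{eq:definitionofphi} shows $\partial_c \phi_{Y,c}(x) < 0$ for every $c > 0$, while the computation preceding \eqref{derivlastcoord} gives $\partial_{x^0_{n+1}} c(X_0, Y) = -\kappa + (y_{n+1} - x^0_{n+1})/|X_0 - Y|$, which is strictly negative because $(y_{n+1} - x^0_{n+1})/|X_0 - Y| \leq 1 < \kappa$, using $y_{n+1}\geq\tau_1>\tau_0\geq x^0_{n+1}$ via \eqref{compatibilitycondition}. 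The product is positive, which immediately gives the lower inequality $\phi(x, Y, X_0^*) - \phi(x, Y, X_0)\geq 0$.

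For the upper bound, I would observe that $c(X_0, Y) = c(\bar X, Y)$, since $X_0$ is by construction a point on the hyperboloid $\H(Y, c(\bar X, Y))$. Combined with the monotonicity of $c(\cdot, Y)$ in $x_{n+1}$ just established and the hypothesis $X_0^* \in \mathcal C_\Omega$, this yields $c(X_0 + t\,e_{n+1}, Y) \geq c(X_0^*, Y) \geq (\kappa-1)\Delta>0$ for all $t\in[0, h]$ via \eqref{lowerestimateforC}. Hence the derivation producing \eqref{derivlastcoord} is valid along the whole segment, the integrand is pointwise bounded by $(\kappa+1)/(\kappa-1)$, and integrating over $[0, h]$ gives the desired upper bound. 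The argument is essentially bookkeeping once the factorization through $c(X_0, Y)$ is noticed, so I do not anticipate any genuine obstacle beyond verifying that the segment stays in the regime $c > 0$ where \eqref{derivlastcoord} is applicable.
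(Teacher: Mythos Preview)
Your proof is correct and follows essentially the same approach as the paper: both establish positivity of $\partial_{x^0_{n+1}}\phi$ by factoring through $c(X_0,Y)$ and obtain the upper bound from \eqref{derivlastcoord}, the only cosmetic difference being that you integrate while the paper applies the mean value theorem. Your additional check that $c>0$ along the segment $[X_0,X_0^*]$ is a bit more careful than the paper's argument, but not a different idea.
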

\begin{proof} We recall that $c(X^0, Y) = \kappa(y_{n+1} - x^0_{n+1}) - |X^0 - Y|$. Since $\kappa > 1$ we have $\partial_{x^0_{n+1}}c(X^0, Y) = -\left(\kappa - \frac{y_{n+1} - x^0_{n+1}}{|Y - X_0|} \right) < 0$. Therefore,
$$\partial_{x^0_{n+1}} \phi(x, Y, X^0) = \left[-\frac{\kappa}{\kappa^2 - 1} - \left(\frac{c(X^0,Y)^2}{(\kappa^2 - 1)^2} + \frac{|x-y|^2}{\kappa^2 - 1} \right)^{-\frac{1}{2}} \frac{c(X^0,Y)}{(\kappa^2 - 1)^2} \right] \partial_{x^0_{n+1}}c(X^0, Y) > 0.$$
It follows that $\phi(x, Y, X_0^*) - \phi(x, Y, X_0) \geq 0$.
For the upper bound we have, for some $\tilde{X} \in [X_0, X_0^*]$, that
$$\phi(x, Y, X_0^*) - \phi(x, Y, X_0) = \partial_{x^0_{n+1}} \phi(x, Y, \tilde{X}) h \leq \frac{\kappa + 1}{\kappa - 1} h,$$
where in the last inequality we have used \eqref{derivlastcoord}. 
\end{proof}

The following lemma can be proved verbatim as in \cite[Lemma 2.3]{gutierrez-tournier:regularityparallelrefractor}.

\begin{Lemma}\label{lemmafour} There exists $C > 0$ such that for all $Y, \bar{Y} \in \Sigma$ and $X_0 \in \mathcal{C}_{\Omega}$, we have
$$|\phi(x,Y,X_0) - \phi(x,\bar{Y},X_0)| \leq C|x-x_0||Y-\bar{Y}|.$$
\end{Lemma}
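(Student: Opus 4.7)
The plan is to exploit the fact that $\phi(x_0, Y, X_0) = x^0_{n+1}$, independently of $Y$. This follows immediately from \eqref{eq:definitionofphi}: the hyperboloid $\mathcal{H}(Y, c(X_0,Y))$ is constructed precisely to pass through $X_0$. Consequently, if we set
\[
F(x) := \phi(x, Y, X_0) - \phi(x, \bar{Y}, X_0),
\]
then $F(x_0) = 0$. Since $\Omega$ is assumed convex, the segment $[x_0, x] \subset \Omega$, and the mean value theorem yields
\[
|F(x)| \leq |x - x_0| \sup_{\xi \in [x_0,x]} \bigl| D_x\phi(\xi, Y, X_0) - D_x\phi(\xi, \bar{Y}, X_0) \bigr|.
\]

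Next I would bound the gradient difference by applying the mean value theorem in the $Y$ variable along the straight segment $Y_t := \bar{Y} + t(Y - \bar{Y})$, $t \in [0,1]$:
\[
D_x\phi(\xi, Y, X_0) - D_x\phi(\xi, \bar{Y}, X_0) = \int_0^1 D_Y D_x \phi(\xi, Y_t, X_0)\,(Y - \bar{Y})\,dt.
\]
The entries of $D_Y D_x \phi$ are exactly the mixed second derivatives computed in the excerpt, and the uniform bound \eqref{mixedhessian} gives
\[
\bigl| D_Y D_x\phi(\xi, Y_t, X_0) \bigr| \leq \frac{C_\kappa}{c(X_0, Y_t)}.
\]
Thus the lemma reduces to producing a positive lower bound for $c(X_0, Y_t)$ that is uniform in $\xi \in \Omega$, $X_0 \in \mathcal{C}_\Omega$, and $t \in [0,1]$.

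For this, I would reuse the argument that led to \eqref{lowerestimateforC}. Since both $Y$ and $\bar{Y}$ lie in the slab $\mathbb{R}^n \times [\tau_1, \tau_1 + \omega]$, so does $Y_t$, giving $y_{t,n+1} - x^0_{n+1} \geq \tau_1 - \tau_0$. The horizontal distance $|x^0 - y_t|$ is controlled by $\operatorname{diam}\bigl(\Omega \cup \operatorname{conv}(\pi(\Sigma))\bigr)$, which is finite by compactness of $\Sigma$ and $\overline{\Omega}$. Plugging this into $\psi(t) = \kappa t - \sqrt{M^2 + t^2}$ with $M := \operatorname{diam}\bigl(\Omega \cup \operatorname{conv}(\pi(\Sigma))\bigr)$ and invoking \eqref{compatibilitycondition} (which I may need to tighten the choice of $\tau_1$ slightly, or re-run the $\psi$-argument with $M$ in place of $\Delta$) yields $c(X_0, Y_t) \geq \gamma > 0$ for a constant $\gamma$ depending only on $\kappa$, $\Omega$, $\Sigma$, and the structural parameters. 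Combining the two mean value estimates delivers the claim with $C = C_\kappa / \gamma$.

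The only subtle point is the verification that $c(X_0, Y_t) > 0$ along the interpolation even though $Y_t$ may escape the surface $\Sigma$: this is the step where compactness of $\Sigma$ and the compatibility condition \eqref{compatibilitycondition} are used, in exactly the spirit of the lower bound derived just before the statement of Lemma \ref{unposu}. Everything else is a straightforward double application of the mean value theorem and the already established derivative bounds, which is why the argument can be imported essentially verbatim from \cite[Lemma 2.3]{gutierrez-tournier:regularityparallelrefractor}.
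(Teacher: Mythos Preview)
Your argument is correct and is precisely the approach of \cite[Lemma~2.3]{gutierrez-tournier:regularityparallelrefractor} to which the paper defers: use $\phi(x_0,Y,X_0)=x^0_{n+1}$ to kill the zeroth order, then apply the mean value theorem first in $x$ and then in $Y$, invoking the mixed bound \eqref{mixedhessian}.

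One small simplification: your hedge about replacing $\Delta$ by $M=\operatorname{diam}\bigl(\Omega\cup\operatorname{conv}(\pi(\Sigma))\bigr)$ is unnecessary. For $Y_t=(1-t)\bar Y+tY$ and any $x^0\in\Omega$,
\[
|x^0-y_t|=\bigl|(1-t)(x^0-\bar y)+t(x^0-y)\bigr|\le (1-t)\Delta+t\Delta=\Delta,
\]
and $y_{t,n+1}\in[\tau_1,\tau_2]$ by convexity of the slab. Hence the argument leading to \eqref{lowerestimateforC} applies verbatim to $Y_t$ and gives $c(X_0,Y_t)\ge(\kappa-1)\Delta$ without any adjustment to $\tau_1$.
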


\setcounter{equation}{0}
\section{Regularity assumptions on the target}\label{asstar}

We assume the following assumptions on the target $\Sigma$.

\subsection{Parametrization of the target}


Let us assume that each $Y \in \Sigma$ can be represented in the form $Y = X + s_X(\Lambda)\Lambda$ for each $X \in \mathcal{C}_{\Omega}$, with $|\Lambda| = 1$ and $s_X(\Lambda)$ Lipschitz as a function of $\Lambda$.\\
The Lipschitz character of $s_X$, together with \eqref{Fact} and the estimate \eqref{mixedhessian}, implies that for any $X_0 \in \mathcal{C}_{\Omega}$ there exists $C = C(X_0) \geq 1$ such that
\begin{equation}\label{LipschitzEstimate}
\frac{1}{C}|\bar{Y} - \hat{Y}| \leq |\bar{v} - \hat{v}| \leq C|\bar{Y} - \hat{Y}|
\end{equation}
for all $\bar{Y}, \hat{Y} \in \Sigma$, $\bar{v}, \hat{v} \in \mathbb{R}^n$, $\bar{s}, \hat{s} > 0$ satisfying $\bar{Y} = X_0 + \bar{s}\Lambda(\bar{v})$ and $\hat{Y} = X_0 + \hat{s}\Lambda(\hat{v})$, see \cite[Lemma 2.1]{gutierrez-tournier:regularityparallelrefractor}.

\subsection{Regularity of the target}

Given $\bar{Y}, \hat{Y} \in \Sigma$ and $X_0 \in \mathcal{C}_{\Omega}$, let $\bar{v} = D_x\phi(x_0, \bar{Y}, X_0)$, $\hat{v} = D_x\phi(x_0, \hat{Y}, X_0)$. Let us consider 
$$C(X_0,\bar{Y}, \hat{Y}) = \left\{X_0 + s \Lambda(v(\lambda)) : s > 0,\, v(\lambda) = (1 - \lambda)\bar{v} + \lambda \hat{v}\,\,\mbox{for } \lambda \in [0,1] \right\}.$$ 
From \eqref{Fact}, we know that if $Y(\lambda) = X_0 + s \Lambda(v(\lambda)) \in C(X_0, \bar{Y}, \hat{Y})$, then $v(\lambda)=D_x\phi(x_0, Y(\lambda), X_0)$. Define 
$$[\bar{Y}, \hat{Y}]_{X_0} := \Sigma \cap C(X_0, \bar{Y}, \hat{Y}).$$ 
By the parametrization of the target and \eqref{LipschitzEstimate}, each $X_0 + s \Lambda(v(\lambda)) \in C(X_0, \bar{Y}, \hat{Y})$ intersects $\Sigma$ in at most one point for each $\lambda \in [0,1]$. The points in $[\bar{Y}, \hat{Y}]_{X_0}$ have the form $Y(\lambda) = X_0 + s_{X_0}(\Lambda(v(\lambda)))\Lambda(v(\lambda))$ for $\lambda \in [0,1]$.

\begin{Definition}\label{regularityhypothesis} Fix $X_0 \in \mathcal{C}_{\Omega}$. We say the target $\Sigma$ is regular from $X_0$ if there exists a neighborhood $U_{X_0}$ and $C_1, C_2 > 0$ depending on $U_{X_0}$ such that, for all $\bar{Y}, \hat{Y} \in \Sigma$ and $Z = (z,z_{n+1}) \in U_{X_0}$, we have
\begin{equation}\label{maximumprincipleforrefractor}
\phi(x, Y_Z(\lambda), Z) \leq \max \left\{\phi(x,\bar{Y},Z), \phi(x,\hat{Y},Z) \right\} - C_1|\bar{Y} - \hat{Y}|^2|x-z|^2
\end{equation}
for all $x \in \Omega$ with $|x-z| \leq C_2$, $\lambda \in [\frac{1}{4}, \frac{3}{4}]$ and $Y_Z(\lambda) = Z + s_Z(\Lambda(v(\lambda))) \Lambda(v(\lambda))$.
\end{Definition}

The following characterization of the regularity from a point in $\mathcal{C}_{\Omega}$ can be proved exactly as in \cite[Theorem 3.2]{gutierrez-tournier:regularityparallelrefractor}.

\begin{Theorem} The target $\Sigma$ is regular from $X_0 \in \mathcal{C}_{\Omega}$ if and only if there exists a neighborhood $U_{X_0}$ and $C(X_0) > 0$ such that, for all $Y_0 \in \Sigma$, $Z \in U_{X_0}$ and for all $\xi,\eta\in\R^n$ with $\xi \perp \eta$, we have
\begin{equation}\label{MTW}
\frac{d^2}{d\epsilon^2} \bigg|_{\epsilon = 0} \left\langle D^2_x \phi(x_0, Y_{\epsilon}, X_0) \eta, \eta \right\rangle \geq C(X_0)|\xi|^2 |\eta|^2
\end{equation}
where $Y_{\epsilon} = Z + s_Z(\Lambda(v + \epsilon \xi)) \Lambda(v + \epsilon \xi)$ and $v = D_x\phi(z, Y_0, Z)$.
\end{Theorem}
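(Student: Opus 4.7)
The proof plan is to transfer the argument of \cite[Theorem 3.2]{gutierrez-tournier:regularityparallelrefractor} to the hyperboloid setting; it is the local analog of Loeper's characterization \cite[Proposition 5.1]{loeper:actapaper} of the MTW condition through c-convexity of the c-subdifferential. All analytic ingredients used there are now available in the present setting: the identity \eqref{Fact}, the normalization $\phi(z, Y, Z) = z_{n+1}$ built into \eqref{eq:definitionofphi}, the uniform bounds \eqref{hessian} and \eqref{mixedhessian}, and the biLipschitz parametrization \eqref{LipschitzEstimate} of $\Sigma$ by $v = D_x \phi(z, \cdot, Z)$. Third-order bounds on $\phi$, needed to control Taylor remainders uniformly, follow by differentiating \eqref{2derij} once more and invoking the uniform lower bound $c(X,Y) \geq (\kappa-1)\Delta$ from Section \ref{subsec:derivativeestimates}.

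For the implication $\Leftarrow$, fix $Z \in U_{X_0}$, $\bar Y, \hat Y \in \Sigma$, let $\bar v = D_x\phi(z, \bar Y, Z)$, $\hat v = D_x\phi(z, \hat Y, Z)$, $v(\lambda) = (1-\lambda)\bar v + \lambda \hat v$, and $Y(\lambda) = Y_Z(\lambda)$. By \eqref{Fact} and the normalization, each of $\phi(\cdot, \bar Y, Z)$, $\phi(\cdot, \hat Y, Z)$, $\phi(\cdot, Y(\lambda), Z)$ takes the value $z_{n+1}$ at $z$ and has gradient $\bar v$, $\hat v$, $v(\lambda)$ there. Expanding to second order in $x-z$ reduces \eqref{maximumprincipleforrefractor} to a statement about Hessian differences $D^2_x \phi(z, Y(\lambda), Z) - D^2_x \phi(z, \bar Y, Z)$ (and analogously with $\hat Y$), and a further Taylor expansion in $\lambda$ along $Y(\cdot)$ converts this into \eqref{MTW} applied with $\xi$ parallel to $\hat v - \bar v$ and with $\eta$ in the direction of $x - z$. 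The parallel component of $\eta$ relative to $\xi$ is handled by a one-dimensional convexity argument, while the perpendicular component uses \eqref{MTW} directly. Choosing $|x-z| \leq C_2$ small enough absorbs the cubic remainder and produces \eqref{maximumprincipleforrefractor} for $\lambda \in [\tfrac14, \tfrac34]$, with $C_1$ proportional to $C(X_0)$ and to the biLipschitz constant of \eqref{LipschitzEstimate}.

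For the converse $\Rightarrow$, assume \eqref{maximumprincipleforrefractor}, and given $Y_0 \in \Sigma$, $Z \in U_{X_0}$, orthogonal $\xi, \eta \in \R^n$, apply it with $\bar Y = Y_{-\epsilon}$, $\hat Y = Y_{\epsilon}$ (so $Y_0$ sits at $\lambda = \tfrac12$) and $x = z + t\eta$, for small $\epsilon, t > 0$. The maps $\delta \mapsto \phi(x, Y_\delta, Z)$ are smooth by the derivative bounds of Section \ref{subsec:derivativeestimates}. Taylor-expand both sides of \eqref{maximumprincipleforrefractor} in $t$ and $\epsilon$: the constant, $t$-linear, and $\epsilon$-linear terms cancel because of the normalization $\phi(z,\cdot,Z)=z_{n+1}$, the identity \eqref{Fact}, and the symmetry of the expansion in $\epsilon \leftrightarrow -\epsilon$. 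Dividing through by $\epsilon^2 t^2$ and letting $t, \epsilon \to 0^+$ isolates the coefficient
$$\frac{d^2}{d\epsilon^2}\bigg|_{\epsilon = 0}\bigl\langle D^2_x \phi(z, Y_\epsilon, Z)\eta, \eta\bigr\rangle \geq 2\,C_1\,|\hat v - \bar v|^2 |\eta|^2,$$
which by \eqref{LipschitzEstimate} is equivalent to \eqref{MTW} with $C(X_0)$ proportional to $C_1$.

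The main obstacle is the bookkeeping of Taylor remainders in the $\Leftarrow$ direction --- in particular, showing that the cubic-in-$(x-z)$ error can be absorbed into the quadratic term $C_1 |\bar Y - \hat Y|^2 |x-z|^2$ by shrinking $C_2$, uniformly in $\bar Y, \hat Y \in \Sigma$ and $Z \in U_{X_0}$. This is precisely the role played by the uniform third-order bounds on $\phi$, which are available here thanks to the lower bound on $c(X, Y)$ guaranteed by \eqref{compatibilitycondition}. With these bounds at hand, the combinatorial manipulations are identical to those in \cite[Theorem 3.2]{gutierrez-tournier:regularityparallelrefractor}, and only the hyperboloid-specific formulas from Section \ref{subsec:derivativeestimates} need to be substituted in place of the ellipsoid ones.
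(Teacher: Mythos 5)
Your proposal is correct and takes essentially the same approach as the paper, which simply invokes \cite[Theorem 3.2]{gutierrez-tournier:regularityparallelrefractor} after remarking that the sign flip between the ellipsoid ($\phi$ convex) and hyperboloid ($\phi$ concave) settings flips both sides of the inequalities consistently. You give a fuller account of the underlying Loeper-style Taylor-expansion argument, but the mechanics you describe are exactly those being imported from the cited reference, and the technical ingredients you list (the normalization $\phi(z,Y,Z)=z_{n+1}$, the bounds \eqref{hessian}--\eqref{mixedhessian}, the biLipschitz correspondence \eqref{LipschitzEstimate}, and uniform third-order control via the lower bound on $c(X,Y)$) are the right ones.
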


The theorem above follows from \cite[Theorem 3.2]{gutierrez-tournier:regularityparallelrefractor} since the proof of that theorem does not rely on the particular structure of the function $\phi$ nor on the size of the refractive index $\kappa$. Indeed, the condition \eqref{maximumprincipleforrefractor} is satisfied by the negative of the function in \cite[Theorem 3.2]{gutierrez-tournier:regularityparallelrefractor}, and so the condition \eqref{MTW} has the opposite sign as well.

Let us end this section with some clarifying remarks. The set $[\bar{Y}, \hat{Y}]_{X_0}$ mimics the notion of a $c$-segment in the theory of optimal mass transport (cf. \cite{villani:stfleurBOOK}), while the condition \eqref{MTW} is akin to the Ma-Trudinger-Wang condition (A3) in the regularity theory of optimal transport maps (cf. \cite{MaTrudingerWang:regularityofpotentials}, \cite{villani:stfleurBOOK}). A watershed for the regularity theory of mass transport is the result of Loeper \cite{loeper:actapaper}, which shows the condition (A3) is equivalent to a maximum principle for $c$-support functions. This forms the motivation for the regularity hypothesis \eqref{maximumprincipleforrefractor} and the theorem above is the analog of this characterization for the case of the parallel refractor.


\setcounter{equation}{0}
\section{Local to Global}\label{sec:localtoglobal}





Loeper's maximum principle allowed him to obtain a result which Kim and McCann \cite{KimMcCann:DASMpaper} refer to as the {\emph{DASM}} (Double-Mountain Above Sliding-Mountain) Theorem in the context of optimal mass transport. This in turn enabled Loeper to obtain a local-implies-global result for $c$-support functions. This section is devoted to establishing the analog of this local-implies-global result in the setting of the parallel refractor. We refer the reader to the end of this section for further comments.

We say that the target $\Sigma$ satisfies condition (AW) from $X_0 \in \mathcal{C}_{\Omega}$ if for all $Y_0 \in \Sigma$ written as $Y_0 = X_0 + s_{X_0}(\Lambda(v)) \Lambda(v)$, and for all $\xi \perp \eta$, we have
\begin{equation}\label{eq:AWcondition}
\frac{d^2}{d\epsilon^2} \bigg|_{\epsilon = 0} \left\langle D^2_x \phi(x_0, Y_{\epsilon}, X_0) \eta, \eta \right\rangle \geq 0,\tag{AW}
\end{equation}
where $Y_{\epsilon} = X_0 + s_{X_0}(\Lambda(v + \epsilon \xi)) \Lambda(v + \epsilon \xi)$, and $v = D_x\phi(x_0, Y_0, X_0)$. 
Equivalently, if $Y(v) = X_0 + s_{X_0}(\Lambda(v)) \Lambda(v)$, then the condition (AW) requires that for all $\xi \perp \eta$, we have
$$\sum_{i,j,k,l}D_{v_{\ell},v_k} \left[D_{x_i,x_j} \phi(x_0, Y(v), X_0) \eta_i \eta_j \right]\xi_k \xi_{\ell} \geq 0.$$
\vskip 0.6cm

Let us put $H(v,X) = s_X(\Lambda(v)) Q(v)$, where we recall that $\Lambda(v) = (Q(v)v, -Q(v) + \kappa)$ as defined in \eqref{eq:defofLambdav}. 
We denote $J(Y, X, \eta) = \left\langle D^2_x \phi(x, Y, X) \eta, \eta \right\rangle$. 
\noindent 
By \eqref{2derij}, for $|\eta| = 1$, we have
$$J(Y, X, \eta)=-\frac{1}{\kappa^2 - 1} \left(\frac{c(X,Y)^2}{(\kappa^2 - 1)^2} + \frac{|x-y|^2}{\kappa^2 - 1} \right)^{-\frac{3}{2}} \left\{\frac{c(X,Y)^2}{(\kappa^2 - 1)^2} + \frac{|x-y|^2}{\kappa^2 - 1}  - \frac{\left\langle x - y, \eta \right\rangle^2}{\kappa^2 - 1} \right\}.$$
Since $c(X,Y)^2 + (\kappa^2 - 1)|x - y|^2 = (\kappa|X - Y| - (y_{n+1} - x_{n+1}))^2$, we obtain
$$J(Y, X, \eta) = -(\kappa|X - Y| - (y_{n+1} - x_{n+1}))^{-1} + (\kappa^2 - 1)\left\langle x - y, \eta \right\rangle^2(\kappa|X - Y| - (y_{n+1} - x_{n+1}))^{-3}.$$

\noindent If $Y - X = s_X(\Lambda(v))(Q(v)v, -Q(v) + \kappa)$, then $|Y - X| = s_X(\Lambda(v))$, and so
$$\kappa|X - Y| - (y_{n+1} - x_{n+1}) = \kappa s_X(\Lambda(v)) +  s_X(\Lambda(v)) Q(v) - \kappa s_X(\Lambda(v)) = s_X(\Lambda(v)) Q(v).$$

\noindent Hence,
\begin{equation}
J(Y, X, \eta) = \frac{(\kappa^2 - 1)\left\langle v, \eta \right\rangle^2 - 1}{s_X(\Lambda(v))Q(v)}.
\end{equation}

\noindent Now if $v_{\epsilon} = v + \epsilon \xi$, then we can write $Y_{\epsilon} - X = s_X(\Lambda(v_{\epsilon}))(Q(v_{\epsilon})v_{\epsilon}, -Q(v_{\epsilon}) + \kappa)$. Recalling that $|Y_{\epsilon} - X| = s_X(\Lambda(v_{\epsilon}))$, we therefore obtain
$$J(Y_{\epsilon}, X, \eta) = \frac{(\kappa^2 - 1)\left\langle v_{\epsilon}, \eta \right\rangle^2 - 1}{s_X(\Lambda(v_{\epsilon}))Q(v_{\epsilon})} = \frac{(\kappa^2 - 1)\left\langle v_{\epsilon}, \eta \right\rangle^2 - 1}{H(v_{\epsilon},X)} =: F(v_{\epsilon},X,\eta).$$

\noindent The condition (AW) is then equivalent to
\begin{center}
$\displaystyle{\frac{d^2}{d\epsilon^2} \bigg|_{\epsilon = 0} F(v_{\epsilon}, X, \eta) \geq 0}$.
\end{center}

\noindent On the other hand, by setting $G(v,X) = H(v,X)^{-1}$, we see that 
\begin{center} 
$\displaystyle{F(v, X, \eta) = \left((\kappa^2 - 1)\left\langle v, \eta \right\rangle^2 - 1 \right) G(v,X)}$.
\end{center}

\noindent Notice that if $\xi \perp \eta$, then

$$\frac{d}{d \epsilon} \left[(\kappa^2 - 1)\left\langle v_{\epsilon}, \eta \right\rangle^2 - 1 \right] = 2 (\kappa^2 - 1) \left\langle v_{\epsilon}, \eta \right\rangle \left\langle \xi, \eta \right\rangle  = 0.$$

\noindent It follows that
\begin{equation}
\frac{d^2}{d\epsilon^2} \bigg|_{\epsilon = 0} F(v_{\epsilon}, X, \eta) = \left[(\kappa^2 - 1) \left\langle v, \eta \right\rangle^2 - 1 \right] \left\langle D^2_v G (v, X) \xi, \xi \right\rangle.
\end{equation}

\noindent By the derivative estimate \eqref{derivative} for $\phi(x,Y,X)$, we have $|v| \leq \frac{1}{\sqrt{\kappa^ 2 - 1}}$ (actually it is strictly less than 1). Thus, $(\kappa^2 - 1) \left\langle v, \eta \right\rangle^2 - 1 < 0$ for all $v$ and $|\eta| = 1$. Therefore, the condition \eqref{eq:AWcondition} implies $G(\cdot, X)$ is a positive concave function for each $X$.


\begin{Theorem}\label{maxprinciple} Suppose that the condition \eqref{eq:AWcondition} holds from some $X_0 \in \mathcal{C}_{\Omega}$. Let $\bar{Y}, \hat{Y} \in \Sigma$ be given by $\bar{Y} = X_0+s_{X_0}(\Lambda(\bar{v}))\Lambda(\bar{v})$ and $\hat{Y} = X_0+s_{X_0}(\Lambda(\hat{v}))\Lambda(\hat{v})$. For $\lambda \in (0,1)$, let $v_{\lambda} = (1 - \lambda)\bar{v} + \lambda \hat{v}$, and define $Y_{\lambda} = X_0+ s_{X_0}(\Lambda(v_{\lambda}))\Lambda(v_{\lambda})$. Denoting $\mathcal{H}(Y) = \left\{X \in \mathbb{R}^{n+1} : c(X,Y) \geq c(X_0,Y) \right\}$, 
then $$\H(Y_{\lambda}) \subseteq \mathcal{H}(\bar{Y}) \cup \mathcal{H}(\hat{Y}).$$ In particular, for all $x\in\Omega$, $$\phi(x, Y_{\lambda}, X_0) \leq \max \left\{\phi(x,\bar{Y}, X_0), \phi(x,\hat{Y}, X_0) \right\}.$$
\end{Theorem}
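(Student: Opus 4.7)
My plan is to work with the equivalent pointwise inequality $\phi(x, Y_\lambda, X_0) \leq \max\{\phi(x, \bar Y, X_0), \phi(x, \hat Y, X_0)\}$ for all $x \in \R^n$; this is equivalent to the inclusion $\H(Y_\lambda) \subseteq \H(\bar Y) \cup \H(\hat Y)$, since $\H(Y) = \{(x,t): t \leq \phi(x,Y,X_0)\}$ is the subgraph of the hyperboloid passing through $X_0$ (one verifies $\p_{x_{n+1}} c(X, Y) = -\kappa + (y_{n+1}-x_{n+1})/|X-Y| < 0$ throughout the relevant region since $y_{n+1} > x_{n+1}$). I would then argue by contradiction: suppose the inequality fails at some $(x^*, \lambda^*)$, so that $h(\lambda) := \phi(x^*, Y_\lambda, X_0)$ attains an interior maximum at some $\lambda_0 \in (0,1)$ with $h(\lambda_0) > \max\{h(0), h(1)\}$, giving $h'(\lambda_0) = 0$ and $h''(\lambda_0) \leq 0$.

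Next, I would derive an explicit one-dimensional formula for $h$. The equation $c((x^*, h(\lambda)), Y_\lambda) = c(X_0, Y_\lambda)$ defining the hyperboloid through $X_0$, once squared and simplified using $Q(v)^2|v|^2 + (Q(v) - \kappa)^2 = 1$, reduces to a quadratic in $h(\lambda)$. Choosing the root that matches $h \equiv x_{0, n+1}$ when $x^* = x_0$ yields $h(\lambda) = x_{0,n+1} + (H_\lambda - B_\lambda)/(\kappa^2 - 1)$ with $p = x^* - x_0$, $v_\lambda = (1-\lambda)\bar v + \lambda \hat v$, $H_\lambda = H(v_\lambda, X_0)$, and $B_\lambda = \sqrt{H_\lambda^2 - 2(\kappa^2-1) H_\lambda\, p \cdot v_\lambda + (\kappa^2-1)|p|^2}$. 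The critical condition $h'(\lambda_0) = 0$ is equivalent to $B'(\lambda_0) = H'(\lambda_0)$. Differentiating $B^2$ twice in $\lambda$ (with $v_\lambda'' = 0$), substituting this critical relation to eliminate the $p \cdot \xi$ term (with $\xi = \hat v - \bar v$), and using the identity $H H'' - 2(H')^2 = -H^3 (G)''$ with $G = 1/H$, the expression should collapse to
\[ h''(\lambda_0) = -\frac{\bigl[B_{\lambda_0} - H_{\lambda_0} + (\kappa^2-1)\, p\cdot v_{\lambda_0}\bigr]\, H_{\lambda_0}^2}{(\kappa^2 - 1)\, B_{\lambda_0}}\, \langle D^2_v G(v_{\lambda_0}, X_0)\, \xi, \xi\rangle. \]

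The bracketed prefactor is positive: the identity $B^2 - (H - (\kappa^2-1)p\cdot v)^2 = (\kappa^2-1)[|p|^2 - (\kappa^2-1)(p\cdot v)^2] \geq 0$, valid via Cauchy--Schwarz and the bound $(\kappa^2-1)|v|^2 < 1$ from \eqref{derivative}, gives $B \geq |H - (\kappa^2-1) p\cdot v|$, and a sign case analysis on $H - (\kappa^2-1)p\cdot v$ yields $B - H + (\kappa^2-1) p \cdot v > 0$ in both cases. Since condition \eqref{eq:AWcondition} is equivalent to concavity of $G(\cdot, X_0)$ by the analysis preceding the theorem, $\langle D^2_v G(v_{\lambda_0}, X_0)\xi, \xi\rangle \leq 0$, hence $h''(\lambda_0) \geq 0$. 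The hard part is that this combined with $h''(\lambda_0) \leq 0$ yields only equality, not a strict contradiction --- the classical weak-vs-strict issue for (A3w)-type conditions. To conclude, I would run a perturbation argument: regularize the target parametrization $s_{X_0}$ so the resulting $G^\varepsilon$ is strictly concave, apply the strict-case version of the above argument to the perturbed family $Y_\lambda^\varepsilon$ to obtain the maximum principle for the perturbation, and pass to $\varepsilon \to 0$, exploiting the continuous dependence of the hyperboloid $\phi$ on the parametrization.
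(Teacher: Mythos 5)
Your argument is correct, but it takes a genuinely different route from the paper's. The paper's proof is purely geometric: working at the level of the sets $\H(Y)$ in $\R^{n+1}$, it shows that the boundary intersections $\bd\H(\hat Y)\cap\bd\H(Y_\lambda)$ and $\bd\H(\bar Y)\cap\bd\H(Y_\lambda)$ lie in hyperplanes with normals $\hat\eta,\bar\eta$, that each associated closed half-space pushes points of $\H(Y_\lambda)$ into $\H(\hat Y)$ (resp.\ $\H(\bar Y)$) --- a step that relies on the structural bound $|Y|-\kappa x_{n+1}\geq 0$ from \eqref{compatibilitycondition} --- and that the supporting normal $N_\lambda=(v_\lambda,-1)$ of $\H(Y_\lambda)$ is a non-negative linear combination of $\hat\eta$ and $\bar\eta$, the sign of the coefficients being exactly where the concavity of $1/H$ enters. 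Your proof instead freezes $x^*$, forms the one-variable profile $h(\lambda)=\phi(x^*,Y_\lambda,X_0)$, derives the closed formula $h=(H_\lambda-B_\lambda)/(\kappa^2-1)$ (your algebra here is correct and cleanly exploits the identity $c(X_0,Y)^2+(\kappa^2-1)|x_0-y|^2=H^2$), and then shows via the critical-point identity that $h''$ has a sign controlled by $\langle D^2_v G\,\xi,\xi\rangle$; I checked your formula for $h''(\lambda_0)$ and the positivity of the bracketed prefactor $B-H+(\kappa^2-1)\,p\cdot v$ via the difference-of-squares identity, and both are right (the strict bound $(\kappa^2-1)|v|^2<1$ holds because $c(X_0,Y)>0$). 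This is closer in spirit to Loeper's original second-order verification than the Kim--McCann-style geometric argument the paper uses. The trade-off is precisely the one you flag: the paper's set-theoretic argument needs only weak inequalities and therefore handles the degenerate case of \eqref{eq:AWcondition} with no extra work, whereas your ODE argument gives $h''(\lambda_0)\geq 0$ against $h''(\lambda_0)\leq 0$, forcing a regularization. Your proposed fix is sound --- perturb $G$ (equivalently $s_{X_0}$, via $s^\varepsilon=1/(G^\varepsilon Q)$) to a strictly concave $G^\varepsilon$, run the strict argument, and pass $\varepsilon\to0$ using the explicit continuous dependence of $H^\varepsilon,B^\varepsilon,h^\varepsilon$ --- but it is an additional layer that the paper's approach avoids. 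One small point worth making explicit when you write this up: the perturbation should be applied directly to $G$ rather than to $s_{X_0}$, since a concave perturbation of $s$ does not obviously yield a concave perturbation of $G=1/(sQ)$; as stated your plan already gestures at this by saying ``so the resulting $G^\varepsilon$ is strictly concave,'' but spelling out that one sets $G^\varepsilon=G-\varepsilon|v|^2$ and then recovers $s^\varepsilon$ from it would remove any ambiguity.
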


\begin{proof} Assume for simplicity $X_0=0$. Let us first make note of the following:
\begin{align}\label{simplelemma}
c(X,Z) \geq c(0,Z) & \Leftrightarrow \kappa(z_{n+1} - x_{n+1}) - |X - Z| \geq \kappa z_{n+1} - |Z| \\
& \Leftrightarrow |X - Z| \leq |Z| - \kappa x_{n+1} \notag\\
& \Rightarrow |X|^2 - 2 X \cdot Z + |Z|^2 \leq |Z|^2 - 2 \kappa x_{n+1} |Z| + \kappa^2 x_{n+1}^2 \notag\\
& \Rightarrow |X|^2 - \kappa^2 x_{n+1}^2 \leq 2 X \cdot Z - 2\kappa x_{n+1} |Z|.\notag \end{align}

We notice that the set $\bd\mathcal{H}(\hat{Y}) \cap \bd\mathcal{H}(Y_{\lambda})$ is contained in a hyperplane $\hat{T}$. Indeed, if $X \in \bd\mathcal{H}(\hat{Y}) \cap \bd\mathcal{H}(Y_{\lambda})$, then $c(X,\hat{Y}) = c(0, \hat{Y})$ and $c(X,Y_{\lambda}) = c(0, Y_{\lambda})$. Applying \eqref{simplelemma} in the case of equality with $Z = \hat{Y}$ and $Z = Y_{\lambda}$, we obtain

\begin{center}
$|X|^2 - \kappa^2 x_{n+1}^2 = 2 X \cdot \hat{Y} - 2\kappa x_{n+1} |\hat{Y}|$;\\
$|X|^2 - \kappa^2 x_{n+1}^2 = 2 X \cdot Y_{\lambda} - 2\kappa x_{n+1} |Y_{\lambda}|$.
\end{center}

\noindent Hence,

\begin{center}
$\displaystyle{X \cdot \hat{Y} - \kappa x_{n+1} |\hat{Y}| = X \cdot Y_{\lambda} - \kappa x_{n+1} |Y_{\lambda}|}$.
\end{center}

\noindent We can rewrite this as $X \cdot \hat{\eta} = 0$, where

\begin{center}
$\displaystyle{\hat{\eta} = (\hat{Y} - \kappa|\hat{Y}|e_{n+1}) - (Y_{\lambda} - \kappa|Y_{\lambda}|e_{n+1})}$.
\end{center}

\noindent In conclusion, $X \in \bd\mathcal{H}(\hat{Y}) \cap \bd\mathcal{H}(Y_{\lambda})$ implies $X \cdot \hat{\eta} = 0$. Analogously, if $X \in \bd\mathcal{H}(\bar{Y}) \cap \bd\mathcal{H}(Y_{\lambda})$, then $X \cdot \bar{\eta} = 0$ where 

\begin{center}
$\displaystyle{\bar{\eta} = (\bar{Y} - \kappa|\bar{Y}|e_{n+1}) - (Y_{\lambda} - \kappa|Y_{\lambda}|e_{n+1})}$.
\end{center}

\noindent Recall that any $Y \in \Sigma$ is given by $Y = s_0(\Lambda(v)) \Lambda(v)$. 
Denoting $H(v) := H(v,0)=s_0(\Lambda(v)) Q(v)$, 
we may rewrite $\hat{\eta}$ and $\bar{\eta}$ as the $(n+1)$-vectors

\begin{center}
$\displaystyle{\hat{\eta} = (H(\hat{v})\hat{v} - H(v_{\lambda}) v_{\lambda}, H(v_{\lambda})- H(\hat{v})), \ \bar{\eta} = (H(\bar{v})\bar{v} - H(v_{\lambda}) v_{\lambda}, H(v_{\lambda})- H(\bar{v}))}$.
\end{center}

\noindent This is because the first $n$ components of $Y$ are given by the vector $H(v)v$, while the $(n+1)$-st component is given by $Y_{n+1} = |Y|(-Q(v) + \kappa)$, since $|\Lambda(v)| = 1$. Hence, $Y_{n+1} - \kappa|Y| = -|Y| Q(v)$. Since $H(v) = s_0(\Lambda(v)) Q(v) = |Y| Q(v)$, it follows that $Y_{n+1} - \kappa|Y| = -H(v)$. Let us now prove a couple of claims.

\begin{enumerate}

\item[(1)] Suppose $X \in \mathcal{H}(Y_{\lambda})$. Then
$$X \cdot \hat{\eta} \geq 0 \quad\Rightarrow\quad X \in \mathcal{H}(\hat{Y}),$$
$$X \cdot \bar{\eta} \geq 0\quad\Rightarrow\quad X \in \mathcal{H}(\bar{Y}).$$
As a matter of fact, since $X \in \mathcal{H}(Y_{\lambda})$, we have $c(X, Y_{\lambda}) \geq c(0, Y_{\lambda})$, which implies by \eqref{simplelemma}

\begin{center}
$\displaystyle{|X|^2 - 2 X \cdot Y_{\lambda} + 2 \kappa x_{n+1} |Y_{\lambda}| - \kappa^2 x_{n+1}^2 \leq 0}$.
\end{center}
If $X \cdot \hat{\eta} \geq 0$, we then have 
\begin{center}
$\displaystyle{X \cdot (Y_{\lambda} - \hat{Y}) - \kappa x_{n+1}(|Y_{\lambda}| - |\hat{Y}|) \leq 0}$.
\end{center}
The last two inequalities give
\begin{center}
$\displaystyle{|X|^2 - 2 X \cdot \hat{Y} + 2 \kappa x_{n+1} |\hat{Y}| - \kappa^2 x_{n+1}^2 \leq 0}$.
\end{center}
In particular, this implies $|X - \hat{Y}|^2 \leq (|\hat{Y}| - \kappa x_{n+1})^2$. We want to conclude that $|X - \hat{Y}| \leq |\hat{Y}| - \kappa x_{n+1}$. This is possible thanks to the structural assumption \eqref{compatibilitycondition}. In fact, for any $Y \in \Sigma$ and $X = (x, x_{n+1}) \in \mathcal{C}_{\Omega}$, we have
\begin{center}
$\displaystyle{|Y| - \kappa x_{n+1} \geq y_{n+1} - \kappa x_{n+1} \geq \tau_1 - \kappa \tau_0 \geq 0}$.
\end{center}
This gives $|X - \hat{Y}| \leq |\hat{Y}| - \kappa x_{n+1}$, which implies $c(X,\hat{Y}) \geq c(0,\hat{Y})$. Thus, assuming $X \cdot \hat{\eta} \geq 0$, we have $X \in \mathcal{H}(\hat{Y})$, and the first implication is proved. The proof of the second implication we claimed is completely analogous.
%
%
%
\end{enumerate}
Let us note that so far, we have not used the condition \eqref{eq:AWcondition}, which we have shown to be equivalent to the concavity of $\frac{1}{H(v)}$. We will now use this fact in the proof of the following claim.

\begin{enumerate}

\item[(2)] If $X \cdot \hat{\eta} < 0$ and $X \cdot \bar{\eta} < 0$, then $X \notin \mathcal{H}(Y_{\lambda})$.
\bigskip

\noindent Assume $X \cdot \hat{\eta} < 0$ and $X \cdot \bar{\eta} < 0$. Notice that $$\mathcal{H}(Y_{\lambda}) \backslash \left\{0 \right\} \subset \left\{X \in \mathbb{R}^{n+1} : X \cdot N_{\lambda} > 0 \right\},$$ where $N_{\lambda} = (v_{\lambda}, -1)$. Hence, it is enough to show that $X \cdot N_{\lambda} \leq 0$. First assume $H(v_{\lambda}) - H(\bar{v}) \neq 0$ and $H(v_{\lambda}) - H(\hat{v}) \neq 0$. We will show that
$$N_{\lambda} = \left(\frac{1 - t}{H(\bar{v}) - H(v_{\lambda})} \right) \bar{\eta} + \left(\frac{t}{H(\hat{v}) - H(v_{\lambda})} \right) \hat{\eta}.$$
with $\frac{1 - t}{H(\bar{v}) - H(v_{\lambda})} \geq 0$ and $\frac{t}{H(\hat{v}) - H(v_{\lambda})} \geq 0$ for some $t$. By comparing the first $n$ components of $\bar{\eta}$ and $\hat{\eta}$, we find that the above equality holds if and only if
\begin{eqnarray*}
&&v_{\lambda} = \frac{(1 - t)(H(\bar{v})\bar{v} - H(v_{\lambda})v_{\lambda})}{H(\bar{v}) - H(v_{\lambda})} + \frac{t(H(\hat{v})\hat{v} - H(v_{\lambda})v_{\lambda})}{H(\hat{v}) - H(v_{\lambda})}\\
&& \hphantom{v_{\lambda}}=(1 - t) \left(v_{\lambda} + \frac{H(\bar{v})(\bar{v} - v_{\lambda})}{H(\bar{v}) - H(v_{\lambda})}\right) + t \left(v_{\lambda} + \frac{H(\hat{v})(\hat{v} - v_{\lambda})}{H(\hat{v}) - H(v_{\lambda})}\right)\\
&\Leftrightarrow&(\hat{v} - \bar{v})\left\{\frac{(1-t)\lambda H(\bar{v})}{H(v_{\lambda}) - H(\bar{v})} - \frac{t(1 - \lambda) H(\hat{v})}{H(v_{\lambda}) - H(\hat{v})} \right\} = 0.
\end{eqnarray*}
Therefore, we choose $t$ such that
$$\frac{(1-t)\lambda H(\bar{v})}{H(v_{\lambda}) - H(\bar{v})} = \frac{t(1 - \lambda) H(\hat{v})}{H(v_{\lambda}) - H(\hat{v})}.$$
Since $Q(v) > 0$, we have $\lambda H(\bar{v}) > 0$ and $(1 - \lambda)H(\hat{v}) > 0$. It follows that $\frac{(1-t)}{H(v_{\lambda}) - H(\bar{v})}$ and  $\frac{t}{H(v_{\lambda}) - H(\hat{v})}$ have the same sign. From the last identity, we also obtain
%
$$\lambda H(\bar{v}) = \frac{t}{H(v_{\lambda}) - H(\hat{v})} \left\{H(v_{\lambda})((1 - \lambda)H(\hat{v}) + \lambda H(\bar{v})) - H(\bar{v})H(\hat{v}) \right\}.$$
By the concavity of $1/H$, we have $H(v_{\lambda})((1 - \lambda)H(\hat{v}) + \lambda H(\bar{v})) - H(\bar{v})H(\hat{v}) \leq 0$. Hence, $\frac{t}{H(v_{\lambda}) - H(\hat{v})} \leq 0$ and thus $\frac{(1-t)}{H(v_{\lambda}) - H(\bar{v})} \leq 0$ as well.

Now consider the case $H(v_{\lambda}) - H(\hat{v}) = 0$ and $H(v_{\lambda}) - H(\bar{v}) \neq 0$. From the concavity of $1/H$, we have $H(v_{\lambda}) < H(\bar{v})$. If we write $N_{\lambda} = \frac{1}{H(v_{\lambda}) - H(\bar{v})} \bar{\eta} + t \hat{\eta}$, then $t = \frac{\lambda H(\bar{v})}{(H(v_{\lambda}) - H(\bar{v}))(1-\lambda)H(\hat{v})}$, and so $t < 0$. 

Finally, the last case to consider is $H(v_{\lambda}) = H(\bar{v}) = H(\hat{v})$. In such case $\bar{\eta} = \lambda H(\bar{v})(\hat{v} - \bar{v}, 0)$ and $\hat{\eta} = (1-\lambda)H(\bar{v})(\bar{v} - \hat{v}, 0)$, and so both inequalities $X \cdot \hat{\eta} <0$ and $X \cdot \bar{\eta} < 0$ cannot hold simultaneously. 
\end{enumerate}
\bigskip

The above claims complete the proof of the theorem. Indeed, by the second claim, if $X \in \H(Y_{\lambda})$, then either $X \cdot \hat{\eta} \geq 0$ or $X \cdot \bar{\eta} \geq 0$. Hence, it follows from the first claim that $X \in \H(\hat{Y})$ or $X \in \H(\bar{Y})$.
\end{proof}

For a function $u : \Omega \rightarrow [0,\tau_0]$, $x_0 \in \Omega$, $X_0 = (x_0, u(x_0))$, the refractor-normal map is defined as 
\begin{equation}
F_u(x_0) = \left\{Y \in \Sigma : u(x) \geq \phi(x, Y, X_0) \ \forall x \in \Omega \right\}.
\end{equation}
By Definition \ref{def:refractor}, $u$ is a parallel refractor if $F_u(x_0) \neq \emptyset$ for all $x_0 \in \Omega$.

The next result applies the previous theorem to show that a locally supporting hyperboloid is in fact a globally supporting one under the condition \eqref{eq:AWcondition}. The proof is similar to that of  \cite[Proposition 4.2]{gutierrez-tournier:regularityparallelrefractor}; we provide all the details for the convenience of the reader.

\begin{Theorem}\label{localtoglobal} Suppose $u$ is a parallel refractor in $\Omega$ that satisfies the condition \eqref{eq:AWcondition} from $X_0 = (x_0, u(x_0))$, with $x_0 \in \Omega$. If there exists $Y_0 \in \Sigma$ and $\epsilon > 0$ such that $u(x) \geq \phi(x,Y_0,X_0)$ for all $x \in B_{\epsilon}(x_0)$, then $u(x) \geq \phi(x,Y_0,X_0)$ for all $x \in \Omega$. \end{Theorem}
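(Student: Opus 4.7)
The plan is a proof by contradiction patterned on \cite[Proposition~4.2]{gutierrez-tournier:regularityparallelrefractor}, adapted to the hyperboloid setting. Set
\[
A := \{x \in \Omega : u(x) \ge \phi(x, Y_0, X_0)\},
\]
which is closed in $\Omega$ by continuity of $u$ and $\phi$, and contains $B_\epsilon(x_0)$ by hypothesis. Suppose for contradiction that $A \ne \Omega$. Since $\Omega$ is convex and hence connected, there is a boundary point $\bar x \in \partial A \cap \Omega$; by continuity $u(\bar x) = \phi(\bar x, Y_0, X_0)$ while in every neighborhood of $\bar x$ there are points of $\Omega\setminus A$. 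Set $\bar X = (\bar x, u(\bar x))$. The equality $u(\bar x) = \phi(\bar x, Y_0, X_0)$ translates into $c(\bar X, Y_0) = c(X_0, Y_0)$, so $\phi(\cdot, Y_0, X_0) \equiv \phi(\cdot, Y_0, \bar X)$ as functions on $\R^n$.

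Since $u$ is a parallel refractor, $F_u(\bar x) \ne \emptyset$; pick $Y_1 \in F_u(\bar x)$, so that $u(x) \ge \phi(x, Y_1, \bar X)$ for all $x \in \Omega$ with equality at $\bar x$. Parametrize $Y_0$ and $Y_1$ from $X_0$ by writing $Y_i = X_0 + s_{X_0}(\Lambda(v_i))\Lambda(v_i)$ (note $v_0 = D_x\phi(x_0, Y_0, X_0)$ by \eqref{Fact}), and for $\lambda \in [0,1]$ set $v_\lambda = (1-\lambda) v_0 + \lambda v_1$ and $Y_\lambda = X_0 + s_{X_0}(\Lambda(v_\lambda))\Lambda(v_\lambda) \in \Sigma$. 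Theorem~\ref{maxprinciple}, whose sole hypothesis is \eqref{eq:AWcondition} at $X_0$, then yields the $c$-segment maximum principle
\[
\phi(x, Y_\lambda, X_0) \le \max\bigl\{\phi(x, Y_0, X_0),\ \phi(x, Y_1, X_0)\bigr\} \qquad \forall\, x \in \Omega,\ \lambda \in [0,1].
\]

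It remains to extract a contradiction, and this is the delicate step. A first useful observation is the pointwise comparison between the two hyperboloids with focus $Y_1$: the inequality $\phi(x_0, Y_1, X_0) = u(x_0) \ge \phi(x_0, Y_1, \bar X)$ (coming from $Y_1 \in F_u(\bar x)$) forces $c(X_0, Y_1) \le c(\bar X, Y_1)$, whence $\phi(\cdot, Y_1, X_0) \ge \phi(\cdot, Y_1, \bar X)$ on $\R^n$. Combining this with the maximum-principle bound, and using the Lipschitz control \eqref{LipschitzEstimate} on the parametrization $v \mapsto Y_{X_0}(v)$ together with Lemmas~\ref{unposu} and~\ref{lemmafour}, I would track the interpolated family $\phi(\cdot, Y_\lambda, X_0)$ as $\lambda$ varies and isolate an intermediate parameter at which a touching point of this family with $u$ emerges in a neighborhood of $\bar x$, distinct from $x_0$; this in turn would force the strict inequality $u > \phi(\cdot, Y_0, X_0)$ on a full neighborhood of $\bar x$, contradicting $\bar x \in \partial A$. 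The main obstacle is precisely this tracking step: \eqref{eq:AWcondition} is assumed only at the single base point $X_0$, yet the contradiction must be produced near $\bar x$, and so the whole argument must be propagated from $X_0$ to $\bar x$ through the $c$-segment interpolation and the derivative estimates of Section~\ref{subsec:derivativeestimates}.
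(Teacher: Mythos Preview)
Your proposal has a genuine gap: the final ``tracking step'' is not carried out, and you yourself flag it as the main obstacle. You correctly set up the boundary point $\bar x$, the global support $Y_1\in F_u(\bar x)$, the comparison $\phi(\cdot,Y_1,X_0)\ge \phi(\cdot,Y_1,\bar X)$, and the maximum principle from Theorem~\ref{maxprinciple}; but none of these, as stated, forces $u>\phi(\cdot,Y_0,X_0)$ on a full neighborhood of $\bar x$. The difficulty is real: the bound $\phi(x,Y_\lambda,X_0)\le\max\{\phi(x,Y_0,X_0),\phi(x,Y_1,X_0)\}$ is anchored at $X_0$, while you need to control $u-\phi(\cdot,Y_0,X_0)$ near $\bar x$, and you have no a priori relation $u\ge\phi(\cdot,Y_1,X_0)$ globally (only $u\ge\phi(\cdot,Y_1,\bar X)$, which is the smaller function). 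The vague appeal to Lemmas~\ref{unposu}, \ref{lemmafour} and \eqref{LipschitzEstimate} does not bridge this; absent a concrete mechanism you do not have a proof.

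The paper avoids this obstacle entirely by abandoning the boundary-point contradiction and arguing directly at $x_0$ via a subdifferential characterization. From the local touching one extracts $v_0:=D_x\phi(x_0,Y_0,X_0)\in\partial u(x_0)$, and the theorem reduces to showing
\[
\partial u(x_0)\subset \mathcal B_{x_0}:=\{D_x\phi(x_0,Y,X_0):Y\in F_u(x_0)\}.
\]
Two facts give this. First, every \emph{extremal} point $v$ of the convex set $\partial u(x_0)$ is a limit of gradients $Du(x_n)$ at differentiability points $x_n\to x_0$; taking $Y_n\in F_u(x_n)$ and passing to the limit (using compactness of $\Sigma$ and continuity of $\phi$) yields $v\in\mathcal B_{x_0}$. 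Second, $\mathcal B_{x_0}$ is \emph{convex}: if $Y_1,Y_2\in F_u(x_0)$ with $v_i=D_x\phi(x_0,Y_i,X_0)$, then for $Y_\lambda$ on the $c$-segment $[Y_1,Y_2]_{X_0}$ Theorem~\ref{maxprinciple} gives $\phi(\cdot,Y_\lambda,X_0)\le\max\{\phi(\cdot,Y_1,X_0),\phi(\cdot,Y_2,X_0)\}\le u$, so $Y_\lambda\in F_u(x_0)$ and $v_\lambda\in\mathcal B_{x_0}$. Since $\partial u(x_0)$ is the convex hull of its extremal points, the inclusion follows, hence $Y_0\in F_u(x_0)$. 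Note that everything happens at the single base point $X_0$, so the assumption of \eqref{eq:AWcondition} only at $X_0$ is exactly what is used; this is precisely the issue your approach could not resolve.
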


\begin{proof} Denote the refractor by $\mathcal{R}=\{(x,u(x))\,:\, x\in\Omega\}$, and let $Y(X, v) := X + s_{X}(\Lambda(v))\Lambda(v)$ where $X \in \mathcal{R}$. Consider the local sub differential
$$\partial u (x_0) = \left\{v \in \mathbb{R}^n : u(x) \geq u(x_0) + v \cdot (x - x_0) + o(|x-x_0|) \right\}.$$
Notice that if $u(x) \geq \phi(x,Y_0,X_0)$ for all $x \in B_{\epsilon}(x_0)$, then $Y_0 \in \left\{Y(X_0,v) : v \in \partial u (x_0) \right\}$. Indeed, by using the Taylor expansion of $\phi$ around $x_0$, we obtain
\begin{align}\label{TayExp}
u(x) \geq \phi(x, Y_0, X_0) & = \phi(x_0, Y_0, X_0) + D_x\phi(x_0,Y,X_0)\cdot (x - x_0) + o(|x - x_0|) \nonumber \\
& = u(x_0) + D_x\phi(x_0, Y, X_0) \cdot (x - x_0) + o(|x - x_0|).
\end{align}
It follows that $v_0 := D_x\phi(x_0,Y,X_0) \in \partial u (x_0)$. By \eqref{Fact}, we recall that $v = D_x \phi(x, Y, X)$ with $Y \in \Sigma$ if and only if $Y = Y(X,v)$. Therefore, $Y_0 = Y(X_0, v_0)$ with $v_0 \in \partial u (x_0)$ as claimed.

We will now show that under the condition \eqref{eq:AWcondition}, we have the inclusion
\begin{equation}
\left\{Y(X_0,v) : v \in \partial u (x_0) \right\} \subset F_u(x_0).
\end{equation}
This will immediately imply $Y_0 \in F_u(x_0)$ and conclude the proof. Let us first observe that the above inclusion is equivalent to showing
\begin{equation}
\partial u(x_0) \subset \left\{D_x \phi(x_0, Y, X_0) : Y \in F_u(x_0) \right\} =: \mathcal{B}_{x_0}.
\end{equation}
To this end, we are going to show that the extremal points of $\partial u(x_0)$ are contained in $\mathcal{B}_{x_0}$ and that $\mathcal{B}_{x_0}$ is convex. The convexity of $\partial u(x_0)$ will then conclude the proof.

Let $v_0 \in \partial u(x_0)$ be an extremal point. Then there exists a sequence $x_n \rightarrow x_0$ with $u$ differentiable at $x_n$ and $v_n = Du(x_n) \rightarrow v_0$. Let $X_n = (x_n, u(x_n))$, and let $Y_n \in F_u(x_n)$. Since $u$ is differentiable at $x_n$, it follows that $Y_n = Y(X_n, v_n)$. By compactness of $\Sigma$, we may assume $Y_n \rightarrow \overline{Y_0} \in \Sigma$. We claim $\overline{Y_0} = Y(X_0,v_0)$. Indeed, $u(x) \geq \phi(x, Y_n, X_n)$ for all $x \in \Omega$ with equality at $x = x_n$, so by letting $n \rightarrow \infty$, we obtain $u(x) \geq \phi(x, \overline{Y_0}, X_0)$ for all $x \in \Omega$, with equality at $x = x_0$. Note here that we are also using the continuity of $u$, since $X_n = (x_n, u(x_n)) \rightarrow (x_0, u(x_0)) =: X_0$. Furthermore, $v_n = Du(x_n) = D_x \phi(x_n, Y_n, X_n)$, so since $\phi$ is smooth (as a function of the variables $(x,Y,X)$) and $v_n \rightarrow v_0$, it follows that $v_0 = \lim\limits_{n \rightarrow \infty} v_n = \lim\limits_{n \rightarrow \infty} D_x \phi(x_n, Y_n, X_n) = D_x \phi(x_0, \overline{Y_0}, X_0)$. Combining all this, we conclude that $\overline{Y_0} \in F_u(x_0)$ and $D_x\phi(x_0, \overline{Y_0}, X_0) = v_0$, which shows $\overline{Y_0} = Y(X_0,v_0)$ and $v_0\in \mathcal{B}_{x_0}$.

On the other hand, to show that $\mathcal{B}_{x_0}$ is convex, let $Y_1, Y_2 \in F_u(x_0)$ and let $v_i = D_x \phi(x_0, Y_i, X_0)$ for $i = 1, 2$. Consider $v_{\lambda} = (1 - \lambda) v_1 + \lambda v_2$ and let $Y_{\lambda} = Y(X_0, v_{\lambda})$. Since $v_{\lambda} = D_x \phi(x_0, Y_{\lambda}, X_0)$, we have by the condition \eqref{eq:AWcondition} and Theorem \ref{maxprinciple} that for all $x \in \Omega$,
$$\phi(x, Y_{\lambda}, X_0) \leq \max \left\{\phi(x,Y_1, X_0), \phi(x,Y_2, X_0) \right\} \leq u(x).$$
Hence $Y_{\lambda} \in F_u(x_0)$ and $\mathcal{B}_{x_0}$ is convex.

The proof is thus complete.
\end{proof}

Let us make another comparison with optimal mass transport. The set $\mathcal{B}_{x_0}$ is the analog of the $c$-subdifferential $\partial_c u(x_0)$ (cf. \cite{villani:stfleurBOOK}) in the theory of optimal mass transport. The inclusion $\partial_c u(x_0) \subset \partial u(x_0)$ is immediate from the definition of the $c$-subdifferential. The equality of the sets is obtained only after assuming the weak form of the condition (A3) and establishing Loeper's {\emph{DASM}} theorem. In the case of the parallel refractor, the inclusion $\mathcal{B}_{x_0} \subset \partial u(x_0)$ also follows from the definition of parallel refractor, as illustrated above in \eqref{TayExp}. The above analog of the {\emph{DASM}} theorem thus shows that under the condition \eqref{eq:AWcondition}, we have the equality $\partial u (x_0) = \mathcal{B}_{x_0}$ for all $x_0 \in \Omega$, which is the analog of the equality $\partial_c u(x_0) = \partial u(x_0)$ under the weak (A3) condition in the theory of optimal mass transport.



%

\setcounter{equation}{0}
\section{Main results}\label{main}

\begin{Lemma}\label{cuno}
If $u$ is a parallel refractor, then there exists a structural positive constant $C$ such that
$$u((1 - s)\bar{x} + s \hat{x}) \leq (1 - s) u(\bar{x}) + s u(\hat{x}) + Cs(1-s)|\bar{x} - \hat{x}|^2$$
for each $\bar{x}, \hat{x} \in \Omega$, and $s \in [0,1]$.
\end{Lemma}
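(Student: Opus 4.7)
The plan is to exploit the fact that at any point the refractor is touched from below by a hyperboloid having a uniformly bounded Hessian, so $u$ inherits a quantitative semi-convexity estimate from the family of supporting hyperboloids. First, fix $\bar{x},\hat{x}\in\Omega$ and $s\in[0,1]$, and set $x_s:=(1-s)\bar{x}+s\hat{x}$; by convexity of $\Omega$ we have $x_s\in\Omega$. Applying Definition \ref{def:refractor} at $x_s$ produces $Y\in\Sigma$ and $b>0$ such that, writing $X_s=(x_s,u(x_s))\in\mathcal{C}_{\Omega}$ and $\phi(\cdot):=\phi(\cdot,Y,X_s)$, one has $u\geq\phi$ on $\Omega$ with equality at $x_s$.

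The starting inequality is then immediate:
\[
u(x_s)-(1-s)u(\bar{x})-su(\hat{x})\leq \phi(x_s)-(1-s)\phi(\bar{x})-s\phi(\hat{x}),
\]
so the task reduces to bounding the right-hand side by $Cs(1-s)|\bar{x}-\hat{x}|^2$. The main quantitative input is the pointwise Hessian estimate \eqref{hessian}, $|\partial^2_{x_ix_j}\phi(x,Y,X_s)|\leq 2/c(X_s,Y)$, combined with the structural lower bound $c(X_s,Y)\geq(\kappa-1)\Delta$ that was derived after \eqref{lowerestimateforC} from the compatibility assumption \eqref{compatibilitycondition}, valid because $X_s\in\mathcal{C}_\Omega$ and $Y\in\Sigma$. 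These together yield $\|D^2\phi(x)\|\leq M$ uniformly on $\Omega$, with $M=M(n,\kappa,\Delta)$ structural.

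Next, a second order Taylor expansion of $\phi$ at $x_s$ applied to $\bar{x}$ and $\hat{x}$ gives, after forming the convex combination with weights $1-s$ and $s$, that the first order term cancels by the identity $(1-s)(\bar{x}-x_s)+s(\hat{x}-x_s)=0$. What remains is
\[
\phi(x_s)-(1-s)\phi(\bar{x})-s\phi(\hat{x})\leq \tfrac{M}{2}\bigl[(1-s)|\bar{x}-x_s|^2+s|\hat{x}-x_s|^2\bigr].
\]
Using $|\bar{x}-x_s|=s|\bar{x}-\hat{x}|$ and $|\hat{x}-x_s|=(1-s)|\bar{x}-\hat{x}|$, the bracket simplifies to $s(1-s)|\bar{x}-\hat{x}|^2$, and the conclusion follows with $C=M/2$.

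There is no serious obstacle; the only points requiring minor care are (i) that the hyperboloid chosen at $x_s$ is a \emph{global} support of $u$ on $\Omega$, so that the inequalities $u(\bar{x})\geq\phi(\bar{x})$ and $u(\hat{x})\geq\phi(\hat{x})$ are legal, and (ii) that the Hessian bound for $\phi$ is uniform with respect to the choice of $X_s\in\mathcal{C}_\Omega$ and $Y\in\Sigma$. Both are built into Definition \ref{def:refractor} and the structural assumption \eqref{compatibilitycondition}, so the constant $C$ in the statement depends only on $n$, $\kappa$ and $\Delta$.
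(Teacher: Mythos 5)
Your proposal is correct and follows essentially the same route the paper indicates: it reads off a global supporting hyperboloid at the midpoint $x_s$ from Definition \ref{def:refractor}, reduces the estimate for $u$ to the same estimate for the concave function $\phi(\cdot,Y,X_s)$, and then uses the Hessian bound \eqref{hessian} together with the structural lower bound $c(X_s,Y)\geq(\kappa-1)\Delta$ (from \eqref{compatibilitycondition} via \eqref{lowerestimateforC}) to control the second-order Taylor remainder. The paper's one-line proof references exactly these two ingredients, so your write-out matches its intended argument.
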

\begin{proof} It can be proved following the arguments in \cite[Lemma 5.1]{gutierrez-tournier:regularityparallelrefractor} by exploiting \eqref{hessian} and the lower bound \eqref{lowerestimateforC}. The constant $C$ may be chosen dependent only on the bound for $D_x^2\phi(x,Y,X)$.
\end{proof}

The following lemma is crucial for the regularity of refractors, assuming $\Sigma$ regular from a point with respect to Definition \ref{regularityhypothesis}. We omit the proof since it can be completed proceeding as in \cite[Lemma 5.2]{gutierrez-tournier:regularityparallelrefractor}. The main needed ingredients for the proof are: the concavity of $\phi$, the estimates \eqref{lipschitzestimateforrefractor}-\eqref{mixedhessian}, Lemma \ref{unposu}, Lemma \ref{lemmafour}, and Lemma \ref{cuno}.

\begin{Lemma}\label{Taylor} Suppose $u$ is a parallel refractor and the target $\Sigma$ is regular from $X^* = (x^*, u(x^*))$. There exist positive constants $\delta, C_1, C_2$ depending on $X^*$ such that if $\bar{x}, \hat{x} \in B_{\delta}(x^*)$ and $\bar{Y} \in F_u(\bar{x})$, $\hat{Y} \in F_u(\hat{x})$, and $|\bar{Y} - \hat{Y}| \geq |\bar{x} - \hat{x}|$, then there exists $x_0 \in [\bar{x}, \hat{x}]$ such that if $X_0^* = (x_0, u(x_0))$, then
$$u(x) - \phi(x,Y,X_0^*) \geq -C|\bar{Y} - \hat{Y}||\bar{x} - \hat{x}| - C|Y(\lambda) - Y||x - x_0| + C_1|\bar{Y} - \hat{Y}|^2 |x - x_0|^2,$$
for all $Y(\lambda) \in [\bar{Y}, \hat{Y}]_{X_0^*}$, $\lambda \in [\frac{1}{4}, \frac{3}{4}]$, $Y \in \Sigma$ and $x \in \Omega \cap B_{C_2}(x_0)$. The constant $C$ above depends only the derivatives bounds of $\phi$.
\end{Lemma}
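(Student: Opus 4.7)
The plan is to combine the maximum-principle statement \eqref{maximumprincipleforrefractor} of Definition \ref{regularityhypothesis}, applied at a base point $X_0^*$, with the vertical-shift estimate of Lemma \ref{unposu} and the $Y$-perturbation estimate of Lemma \ref{lemmafour}. As a preliminary I observe that $u$ is differentiable everywhere in $\Omega$: by Definition \ref{def:refractor} every $x_0 \in \Omega$ admits a $C^2$ hyperboloid support from below, so the subdifferential $\partial u(x_0)$ contains $D_x\phi(x_0, Y, X_0)$ for every $Y \in F_u(x_0)$; since by Lemma \ref{cuno} the function $u$ is semi-concave, a nonempty subdifferential forces differentiability. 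Hence $Du(\bar x) = D_x\phi(\bar x, \bar Y, \bar X)$ and $Du(\hat x) = D_x\phi(\hat x, \hat Y, \hat X)$.

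Next I set $x_0 = \tfrac{1}{2}(\bar x + \hat x)$ and $X_0^* = (x_0, u(x_0))$; shrinking $\delta$ and using the Lipschitz bound \eqref{lipschitzestimateforrefractor} ensures $X_0^*$ lies in the neighborhood $U_{X^*}$ from Definition \ref{regularityhypothesis}. The crucial quantitative step is the estimate
$$\bar h := u(x_0) - \phi(x_0, \bar Y, \bar X) \leq C|\bar Y - \hat Y||\bar x - \hat x|,$$
together with its analogue for $\hat h := u(x_0) - \phi(x_0, \hat Y, \hat X)$. Semi-concavity of $u$ (Lemma \ref{cuno}) together with concavity of $\phi(\cdot, \bar Y, \bar X)$ in $x$ (which follows from \eqref{2derij}) applied on the segment $[\bar x, \hat x]$ yields
$$\bar h \leq \tfrac{1}{2}\bigl(u(\hat x) - \phi(\hat x, \bar Y, \bar X)\bigr) + C|\bar x - \hat x|^2;$$
the matching of first-order terms $Du(\bar x) = D_x\phi(\bar x, \bar Y, \bar X)$ combined with the Hessian bound \eqref{hessian} on $\phi$ then produces $u(\hat x) - \phi(\hat x, \bar Y, \bar X) \leq C|\bar x - \hat x|^2$ by a second-order Taylor expansion. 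The standing hypothesis $|\bar Y - \hat Y| \geq |\bar x - \hat x|$ upgrades $|\bar x - \hat x|^2$ to $|\bar Y - \hat Y||\bar x - \hat x|$.

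I then transfer both supports to the common base $X_0^*$. Setting $\bar X_0 := (x_0, \phi(x_0, \bar Y, \bar X))$, the hyperboloid through $\bar X_0$ with focus $\bar Y$ coincides with the one through $\bar X$, so $\phi(\cdot, \bar Y, \bar X_0) \equiv \phi(\cdot, \bar Y, \bar X)$; moreover $X_0^*$ sits exactly $\bar h \geq 0$ above $\bar X_0$. Lemma \ref{unposu} therefore gives
$$\phi(x, \bar Y, X_0^*) \leq \phi(x, \bar Y, \bar X) + \tfrac{\kappa + 1}{\kappa - 1}\bar h \leq u(x) + C|\bar Y - \hat Y||\bar x - \hat x|,$$
and the analogous inequality holds for $\hat Y$. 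Applying the target regularity \eqref{maximumprincipleforrefractor} at $Z = X_0^* \in U_{X^*}$ yields, for $\lambda \in [\tfrac{1}{4}, \tfrac{3}{4}]$ and $|x - x_0| \leq C_2$,
$$\phi(x, Y(\lambda), X_0^*) \leq \max\{\phi(x, \bar Y, X_0^*), \phi(x, \hat Y, X_0^*)\} - C_1|\bar Y - \hat Y|^2|x - x_0|^2.$$
Chaining the two displays produces the claimed lower bound for $Y = Y(\lambda)$, and Lemma \ref{lemmafour} applied at base $X_0^*$ extends it to arbitrary $Y \in \Sigma$, producing the $-C|Y(\lambda) - Y||x - x_0|$ term.

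The hard part is the quantitative bound $\bar h, \hat h \leq C|\bar Y - \hat Y||\bar x - \hat x|$: a direct Lipschitz estimate only gives the useless $O(|\bar x - \hat x|)$, which cannot be absorbed into the target. Obtaining the correct quadratic-in-$|\bar x - \hat x|$ control is what forces the use of everywhere differentiability of $u$, itself a nontrivial consequence of combining semi-concavity (Lemma \ref{cuno}) with the definitional existence of a hyperboloid support at every point of $\Omega$.
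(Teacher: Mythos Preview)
Your overall architecture---lift $\phi(\cdot,\bar Y,\bar X)$ and $\phi(\cdot,\hat Y,\hat X)$ to the common base $X_0^*$ via Lemma~\ref{unposu}, apply the regularity inequality \eqref{maximumprincipleforrefractor}, then pass from $Y(\lambda)$ to an arbitrary $Y$ via Lemma~\ref{lemmafour}---is exactly the route the paper (and \cite{gutierrez-tournier:regularityparallelrefractor}) takes. The gap is in how you bound the height $\bar h=u(x_0)-\phi(x_0,\bar Y,\bar X)$.

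The inequality in Lemma~\ref{cuno} reads $u(x_s)\le (1-s)u(\bar x)+su(\hat x)+Cs(1-s)|\bar x-\hat x|^2$; this is \emph{semi-convexity}, not semi-concavity. Consequently your differentiability argument collapses: for a semi-convex function the subdifferential is automatically nonempty, so the existence of a hyperboloid support from below adds nothing, and indeed a refractor of the form $\max\{\phi_{Y_1,b_1},\phi_{Y_2,b_2}\}$ genuinely has corners where the two sheets cross. Even granting differentiability at $\bar x$, the step ``$Du(\bar x)=D_x\phi(\bar x,\bar Y,\bar X)$ plus the Hessian bound on $\phi$ gives $u(\hat x)-\phi(\hat x,\bar Y,\bar X)\le C|\bar x-\hat x|^2$'' would require an \emph{upper} bound on $D^2u$, i.e.\ semi-concavity, which you do not have; semi-convexity only yields the useless lower bound $u(\hat x)\ge u(\bar x)+Du(\bar x)\cdot(\hat x-\bar x)-C|\bar x-\hat x|^2$.

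The repair does not need differentiability at all. Your ``step~1'' is already correct (with semi-convexity) and gives $\bar h\le \tfrac12\bigl(u(\hat x)-\phi(\hat x,\bar Y,\bar X)\bigr)+C|\bar x-\hat x|^2$. For the remaining term write, using $u(\hat x)=\phi(\hat x,\hat Y,\hat X)$,
\[
u(\hat x)-\phi(\hat x,\bar Y,\bar X)=\bigl[\phi(\hat x,\hat Y,\hat X)-\phi(\hat x,\hat Y,\bar X)\bigr]+\bigl[\phi(\hat x,\hat Y,\bar X)-\phi(\hat x,\bar Y,\bar X)\bigr].
\]
Since $u(\bar x)\ge\phi(\bar x,\hat Y,\hat X)$, the point $\bar X$ lies above the graph of $\phi(\cdot,\hat Y,\hat X)$, so by the monotonicity half of Lemma~\ref{unposu} the first bracket is $\le 0$. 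The second bracket is $\le C|\hat x-\bar x|\,|\hat Y-\bar Y|$ by Lemma~\ref{lemmafour}. Together with $|\bar Y-\hat Y|\ge|\bar x-\hat x|$ this yields $\bar h\le C|\bar Y-\hat Y|\,|\bar x-\hat x|$, and the symmetric estimate for $\hat h$. From here your transfer-and-chain argument goes through verbatim.
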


The following theorem represents the first regularity-type result for refractors we show in this paper. Let us remark that our compatibility conditions \eqref{compatibilitycondition} play a key role in the proof.

Here and in what follows we denote by $\mathcal{N}_{\mu}(S)$ the $\mu$-neighborhood of a set $S$.

\begin{Theorem}\label{lowerboundtheorem} 
Suppose $u$ is a parallel refractor and $\Sigma$ is regular from $X^* = (x^*, u(x^*))$. Let $C, C_1, C_2, \delta$ be from the previous lemma. Then there exists a constant $M = M(C, C_1)$ such that if $\hat{x}, \bar{x} \in B_{\frac{\delta}{2}}(x^*)$, $\bar{Y} \in F_u(\bar{x})$ and $\hat{Y} \in F_u(\hat{x})$ satisfy 

\begin{equation}\label{lowerbound}
|\bar{Y} - \hat{Y}| \geq \max\left\{1, \left(\frac{2M}{\delta} \right)^2, \left(\frac{2M}{C_2} \right)^2 \right\}|\bar{x} - \hat{x}|
\end{equation}

\noindent then there exists $x_0 \in [\bar{x}, \hat{x}]$ such that

\begin{equation}\label{tube}
N_{\mu} \left(\left\{Y(\lambda) \in [\bar{Y}, \hat{Y}]_{X_0^*} : \lambda \in \left[\frac{1}{4}, \frac{3}{4} \right] \right\} \right) \cap \Sigma \subset F_u(B_{\eta}(x_0)),
\end{equation}

\noindent where $X_0^* = (x_0, u(x_0))$, $\mu = |\bar{Y} - \hat{Y}|^{\frac{3}{2}}|\bar{x} - \hat{x}|^{\frac{1}{2}}$, and $\eta = M \left(\frac{|\bar{x} - \hat{x}|}{|\bar{Y} - \hat{Y}|} \right)^{\frac{1}{2}}$.
\end{Theorem}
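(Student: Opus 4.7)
The plan is to use the quadratic lower bound of Lemma \ref{Taylor} to localize, for each $Y$ in the prescribed $\mu$-neighborhood, the contact point of a supporting hyperboloid with focus $Y$ to a ball of radius $\eta$ around $x_0$. Fix $Y\in\Sigma$ in that neighborhood and pick $\lambda_0\in[\tfrac14,\tfrac34]$ with $|Y-Y(\lambda_0)|\le\mu$. Since hypothesis \eqref{lowerbound} forces $|\bar Y-\hat Y|\ge|\bar x-\hat x|$, Lemma \ref{Taylor} yields $x_0\in[\bar x,\hat x]$ and the estimate
\[
u(x)-\phi(x,Y,X_0^*) \;\ge\; -C|\bar Y-\hat Y||\bar x-\hat x| - C\mu\,|x-x_0| + C_1|\bar Y-\hat Y|^2|x-x_0|^2
\]
for every $x\in\Omega\cap B_{C_2}(x_0)$. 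Completing the square in $|x-x_0|$ and inserting $\mu=|\bar Y-\hat Y|^{3/2}|\bar x-\hat x|^{1/2}$, the right-hand side equals $C_1|\bar Y-\hat Y|^2(|x-x_0|-r_1)^2-\widetilde C\,|\bar Y-\hat Y||\bar x-\hat x|$, with $r_1=(C/(2C_1))\sqrt{|\bar x-\hat x|/|\bar Y-\hat Y|}$ and $\widetilde C=C+C^2/(4C_1)$. Both $r_1$ and the positivity threshold scale like $\sqrt{|\bar x-\hat x|/|\bar Y-\hat Y|}$, so one can fix $M=M(C,C_1)$ such that $\phi(\cdot,Y,X_0^*)\le u$ on the annulus $\{x:\eta\le|x-x_0|\le C_2\}$. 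Hypothesis \eqref{lowerbound} is exactly what guarantees $\eta\le\delta/2$ and $\eta\le C_2/2$, so $B_\eta(x_0)$ lies strictly inside $\Omega\cap B_{C_2}(x_0)$.

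To extract the supporting hyperboloid, parameterize the hyperboloids with upper focus $Y$ by $b>0$ as in \eqref{hyperboloid}; since $\phi_{Y,b}$ is strictly decreasing in $b$ pointwise, set
\[
b^{**}\,:=\,\inf\bigl\{b>0\,:\,\phi_{Y,b}\le u\text{ on }\bar B_{C_2}(x_0)\cap\bar\Omega\bigr\}.
\]
By compactness this infimum is attained at a contact point $x_1$ with $\phi_{Y,b^{**}}(x_1)=u(x_1)$. Comparing at $x_0$ gives $b^{**}\ge c(X_0^*,Y)$: if equality holds then $\phi_{Y,b^{**}}=\phi(\cdot,Y,X_0^*)$ and we can choose $x_1=x_0$; otherwise the strict inequality $u(x_1)<\phi(x_1,Y,X_0^*)$ fed back into the squared-off lower bound gives $C_1|\bar Y-\hat Y|^2(|x_1-x_0|-r_1)^2<\widetilde C\,|\bar Y-\hat Y||\bar x-\hat x|$, whence $|x_1-x_0|<\eta$ by the very same choice of $M$. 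In either case $x_1\in B_\eta(x_0)\subset B_{C_2}(x_0)$ is an interior contact point, and $\phi_{Y,b^{**}}=\phi(\cdot,Y,X_1)$ with $X_1=(x_1,u(x_1))$ supports $u$ from below on a whole neighborhood of $x_1$.

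It remains to upgrade this local support to a global one. The regularity of $\Sigma$ from $X^*$ (via the characterization developed in Section \ref{asstar}) propagates \eqref{eq:AWcondition} to every $X$ in a neighborhood of $X^*$, and the Lipschitz continuity of $u$ together with $x_1\in B_\eta(x_0)\subset B_\delta(x^*)$ places $X_1$ in that neighborhood. Applying Theorem \ref{localtoglobal} then turns the local inequality $\phi(\cdot,Y,X_1)\le u$ into the global one on all of $\Omega$, so $Y\in F_u(x_1)\subset F_u(B_\eta(x_0))$ and \eqref{tube} follows. The main technical hurdle is the simultaneous calibration of $M$: the same constant must both localize the annulus where $\phi(\cdot,Y,X_0^*)\le u$ and trap the contact point $x_1$. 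This alignment works only because the two threshold radii produced by completing the square have the common order $\sqrt{|\bar x-\hat x|/|\bar Y-\hat Y|}$, which is precisely the scaling built into $\eta$ and dictated by the power $3/2$ chosen in the definition of $\mu$.
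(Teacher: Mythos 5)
Your proof is correct and follows the same overall architecture as the paper's: apply Lemma \ref{Taylor} to obtain the quadratic lower bound, use the hypothesis \eqref{lowerbound} to ensure $\eta \leq \delta/2$ and $\eta \leq C_2/2$ so that $B_\eta(x_0) \subset B_\delta(x^*) \subset \Omega$, deduce that $\phi(\cdot,Y,X_0^*) < u$ on the annulus $\{\eta \leq |x - x_0| \leq C_2\} \cap \Omega$, locate a contact point $x_1 \in B_\eta(x_0)$ of a hyperboloid with focus $Y$ lying below $u$ on $B_{C_2}(x_0) \cap \Omega$, and finally upgrade to global support via Theorem \ref{localtoglobal}. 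Where you diverge is in the step that identifies the contact point. The paper maximizes $c(X,Y) - c(X_0^*,Y)$ over the graph $G_u$, obtains the maximizer $\tilde X = (\tilde x, u(\tilde x))$ with $\tilde x \in B_\eta(x_0)$, and then spends the final portion of the argument converting the inequality $c(\tilde X,Y) \geq c(X,Y)$ into the local support inequality $u \geq \phi(\cdot,Y,\tilde X)$; that conversion goes through the two auxiliary sign bounds \eqref{consstrass}, derived from the Lipschitz estimate \eqref{lipschitzestimateforrefractor} and the compatibility condition \eqref{compatibilitycondition}, which are needed to control signs when squaring and taking square roots. You instead parameterize the one-parameter family of hyperboloids with focus $Y$ by $b$, observe that $\phi_{Y,b}$ is strictly decreasing in $b$, and define $b^{**}$ as the least parameter for which $\phi_{Y,b}$ sits below $u$ on $\overline{B_{C_2}(x_0)} \cap \overline\Omega$; compactness then produces the contact point, and your argument trapping $x_1$ in $B_\eta(x_0)$ is equivalent to the paper's. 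The two constructions are ultimately the same object — since $\phi_{Y,c(X,Y)}(x) = u(x)$ for $X = (x,u(x))$ on the graph, maximizing $c(X,Y)$ over $G_u$ is the same as minimizing $b$ subject to $\phi_{Y,b} \leq u$ — but your formulation yields the local support inequality $\phi(\cdot,Y,X_1) \leq u$ by construction, avoiding the sign-chasing via \eqref{consstrass}. (Note that your route still uses \eqref{compatibilitycondition} indirectly: the positivity $c(X,Y) > 0$ coming from \eqref{lowerestimateforC} is what makes the map $b \mapsto \phi_{Y,b}$ well-posed and monotone on the relevant range, and also guarantees $b^{**} \geq c(X_0^*,Y) > 0$.) So this is a genuinely cleaner packaging of the same contact-point argument, and each approach buys something: the paper's is more explicit and self-contained, while yours leverages the monotonicity of the hyperboloid family to shortcut the algebra.
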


\begin{proof} Let $M= \dfrac{C + \sqrt{C^2 + 4C_1 C}}{C_1}$ and suppose $\bar{Y} \in F_u(\bar{x})$ and $\hat{Y} \in F_u(\hat{x})$ satisfy \eqref{lowerbound} with this choice of $M$. Define $\mu = |\bar{Y} - \hat{Y}|^{\frac{3}{2}}|\bar{x} - \hat{x}|^{\frac{1}{2}}$ and $\eta = \dfrac{M\mu}{|\bar{Y} - \hat{Y}|^2} =  M \left(\dfrac{|\bar{x} - \hat{x}|}{|\bar{Y} - \hat{Y}|} \right)^{\frac{1}{2}}$. Since $|\bar{Y} - \hat{Y}| \geq |\bar{x} - \hat{x}|$, the Lemma \ref{Taylor} applies. Let $x_0$ be the point in that lemma and let $X_0^* = (x_0, u(x_0))$. Fix $Y \in N_{\mu} \left(\left\{Y(\lambda) \in [\bar{Y}, \hat{Y}]_{X_0^*} : \lambda \in \left[\frac{1}{4}, \frac{3}{4} \right] \right\} \right) \cap \Sigma$. Then there exists $Y(\lambda) \in [\bar{Y}, \hat{Y}]_{X_0^*}$ with $\lambda \in [\frac{1}{4}, \frac{3}{4}]$ such that $|Y(\lambda) - Y| < \mu$. We already know we have the estimate 
$$u(x) - \phi(x,Y,X_0^*) \geq -C|\bar{Y} - \hat{Y}||\bar{x} - \hat{x}| - C\mu|x - x_0| + C_1|\bar{Y} - \hat{Y}|^2 |x - x_0|^2,$$
for all $|x-x_0| < C_2$. Observe that the right-hand side of the above expression is positive for all $x \in \Omega$ satisfying $|x - x_0| \geq \eta$ as long as $\eta$ satisfies the lower bound
$$\eta > \frac{C\mu + \sqrt{C^2 \mu^2 + 4CC_1|\bar{Y} - \hat{Y}|^3|\bar{x} - \hat{x}|}}{2C_1|\bar{Y} - \hat{Y}|^2}.$$
Our choice of $M$ ensures that $\eta$ satisfies such inequality.

Next, observe that by \eqref{lowerbound}, $\eta \leq \frac{C_2}{2}$ and $\eta \leq \frac{\delta}{2}$. Since $x_0 \in B_{\frac{\delta}{2}}(x^*)$, it follows that $B_{\eta}(x_0) \subset B_{\delta}(x^*) \subset \Omega$, by the choice of $\delta$ in Lemma \ref{Taylor}. Thus, $u(x) > \phi(x, Y, X_0^*)$ for all $x \in \left(\Omega \cap B_{C_2}(x_0) \right) \backslash B_{\eta}(x_0)$.

We have to prove that $Y \in F_u(B_{\eta}(x_0))$. By definition of $X_0^*$, we have $u(x_0) = \phi(x_0, Y, X_0^*)$. Let $G_u$ denote the graph of $u$ on $B_{C_2}(x_0) \cap \Omega$, and consider $\sigma := \sup\left\{c(X,Y) - c(X_0^*, Y) : X \in G_u \right\}$, where $c(X,Y) = \kappa(y_{n+1} - x_{n+1}) - |X - Y|$. Since $X_0^* \in G_u$, we must have $\sigma \geq 0$. We claim $c(X,Y) - c(X_0^*, Y) < 0$ if $X \in G_u$ and $|x - x_0| \geq \eta$. For $x \notin B_{\eta}(x_0)$, let $X = (x, u(x))$ and $X' = (x, \phi(x,Y,X_0^*))$. Since $u(x) > \phi(x, Y, X_0^*)$ for $x \notin B_{\eta}(x_0)$, it follows that $X$ lies above $X'$. Hence, since $\partial_{x^0_{n+1}} c(X_0, Y) < 0$, it follows that $c(X,Y) < c(X', Y)$. On the other hand, $c(X',Y) = c(X_0^*, Y)$ by definition, and so the claim follows. Thus, the supremum $\sigma$ is attained at some $\tilde{X} = (\tilde{x}, u(\tilde{x})) \in G_u$ with $\tilde{x} \in B_{\eta}(x_0)$. To conclude the proof, we are going to show that $Y \in F_u(\tilde{x})$. 

To do this, we first show that, for all $x\in\Omega$,
\begin{equation}\label{consstrass}
\kappa(y_{n+1} - u(x)) \geq c(\tilde{X}, Y),\quad \mbox{and }\quad y_{n+1} - u(x) \geq \dfrac{\kappa}{\kappa^2 - 1} c(\tilde{X}, Y).
\end{equation}
Clearly, the second inequality implies the first. We want to explicitly remark that that the structural assumption \eqref{compatibilitycondition} allows us to obtain the second inequality. As a matter of fact, noticing that
$$c(\tilde{X},Y) = \kappa(y_{n+1} - \tilde{x}_{n+1}) - |\tilde{X} - Y| \leq (\kappa - 1)(y_{n+1} - \tilde{x}_{n+1}),$$
we have 
$$\frac{\kappa}{\kappa^2 - 1} c(\tilde{X}, Y) \leq \frac{\kappa}{\kappa + 1}(y_{n+1} - \tilde{x}_{n+1}).$$
We claim $\dfrac{\kappa}{\kappa + 1}(y_{n+1} - \tilde{x}_{n+1}) \leq y_{n+1} - u(x)$, which implies the second inequality in \eqref{consstrass}. Now, a simple rearrangement shows
$$\frac{\kappa}{\kappa + 1}(y_{n+1} - \tilde{x}_{n+1}) \leq y_{n+1} - u(x) \ \Leftrightarrow \ \kappa (u(x) - \tilde{x}_{n+1}) \leq y_{n+1} - u(x).$$
Recall that $\tilde{x}_{n+1} = u(\tilde{x})$. Since $\kappa > 1$ and $|x - \tilde{x}| \leq \Delta$, it follows from \eqref{lipschitzestimateforrefractor} that
$$|u(x) - \tilde{x}_{n+1}| = |u(x) - u(\tilde{x})| \leq \frac{|x - \tilde{x}|}{\sqrt{\kappa^2 - 1}} \leq \frac{\Delta}{\kappa - 1}.$$
By \eqref{compatibilitycondition} we have $\tau_1 - \tau_0 \geq \dfrac{\kappa \Delta}{\kappa - 1}$, which then implies
$$\kappa(u(x) - \tilde{x}_{n+1}) \leq \dfrac{\kappa \Delta}{\kappa - 1} \leq \tau_1 - \tau_0 \leq y_{n+1} - u(x).$$
Therefore, the relations \eqref{consstrass} are satisfied. 

To conclude the proof, we have that $c(\tilde{X},Y) \geq c(X,Y)$ for all $X \in G_u$, that is,
$$\sqrt{|x - y|^2 + (y_{n+1} - u(x))^2} \geq \kappa(y_{n+1} - u(x)) - c(\tilde{X}, Y)$$
for all $x \in \Omega \cap B_{C_2}(x_0)$. By \eqref{consstrass}, the right-hand side of the above inequality is non-negative, and so we get
\begin{align*}
& |x - y|^2 + (y_{n+1} - u(x))^2 \geq \kappa^2(y_{n+1} - u(x))^2 - 2\kappa (y_{n+1} - u(x)) c(\tilde{X}, Y) + c(\tilde{X},Y)^2\\
& \Leftrightarrow \frac{|x - y|^2}{\kappa^2 - 1} \geq (y_{n+1} - u(x))^2 - 2(y_{n+1} - u(x))\frac{\kappa}{\kappa^2 - 1}c(\tilde{X}, Y) + \frac{c(\tilde{X},Y)^2}{\kappa^2 - 1} \\
&\Leftrightarrow \frac{|x - y|^2}{\kappa^2 - 1} + \frac{c(\tilde{X},Y)^2}{(\kappa^2 - 1)^2}\geq \left(y_{n+1} - u(x) - \frac{\kappa}{\kappa^2 - 1}c(\tilde{X}, Y) \right)^2.
\end{align*}
Again by \eqref{consstrass} we infer
$$u(x) \geq y_{n+1} - \frac{\kappa}{\kappa^2 - 1}c(\tilde{X},Y) - \sqrt{\frac{c(\tilde{X},Y)^2}{(\kappa^2 - 1)^2} + \frac{|x - y|^2}{\kappa^2 - 1}} = \phi(x, Y, \tilde{X})$$
for all $x \in \Omega \cap B_{C_2}(x_0)$, with equality at $x = \tilde{x}$. Since $\eta \leq \frac{C_2}{2}$ and $\tilde{x} \in B_{\eta}(x_0)$, we have $B_{\frac{C_2}{2}}(\tilde{x}) \subset B_{C_2}(x_0)$. Recall that by our choice of $\delta$, we have the inclusions $B_{\eta}(x_0) \subset B_{\delta}(x^*) \subset \Omega$ and so $B_{\epsilon}(\tilde{x}) \subset \Omega$ for all $\epsilon > 0$ sufficiently small. Hence, we obtain the local estimate $u(x) \geq \phi(x, Y, \tilde{X})$ for all $x \in B_{\epsilon}(\tilde{x})$ for $\epsilon > 0$ sufficiently small. By Theorem \ref{localtoglobal}, we obtain $u(x) \geq \phi(x, Y, \tilde{X})$ for all $x \in \Omega$, which shows $Y \in F_u(\tilde{x})$ and conclude the proof. \end{proof}

\subsection{H\"older regularity for refractors from growth conditions}\label{subsec:holderestimatesfromgrowthconditions}
In this subsection we prove the $C^{1,\alpha}$-regularity result for refractors. We assume the regularity of the target from a point (Definition \ref{regularityhypothesis}), together with some growth conditions for the target measure which we are now going to introduce precisely.
\bigskip

Let $\sigma$ be a Radon measure on the target $\Sigma$ and let $X_0 \in \mathcal{C}_{\Omega}$. Assume there exists a neighborhood $\mathcal{U}_{X_0}$ and a constant $\tilde{C} > 0$ depending only on $X_0$ such that for all $\bar{Y}, \hat{Y} \in \Sigma$, $Z \in \mathcal{U}_{X_0}$ and $\mu > 0$ sufficiently small (depending on $X_0$), we have 
\begin{equation}\label{localcondition}
\sigma\left[\mathcal{N}_{\mu} \left(\left\{ Y(\lambda) \in [\bar{Y}, \hat{Y}]_Z : \lambda \in \left[\frac{1}{4}, \frac{3}{4} \right] \right\}\right) \cap \Sigma \right] \geq \tilde{C} \mu^{n-1} |\bar{Y} - \hat{Y}|.
\end{equation} 
We will also assume that the measure $\sigma$ satisfies 
\begin{equation}\label{growthcondition}
\sigma(F_u(B_{\eta})) \leq C_0 \eta^{\frac{n}{q}}
\end{equation} 
for all balls $B_{\eta} \subset \subset \Omega$ and some constant $C_0 > 0$ and $1 \leq q < \frac{n}{n-1}$.

\begin{Theorem}\label{thm:Holdercontinuityresult}
Suppose $u$ is a parallel refractor and the target $\Sigma$ is regular from $X^* = (x^*, u(x^*))$. Suppose the target measure $\sigma$ satisfies the growth conditions \eqref{localcondition} and \eqref{growthcondition} at $X^*$. Then there exists $\delta, M > 0$ and $C_2 > 0$ depending on $X^*$ such that if $\bar{x}, \hat{x} \in B_{\delta/2}(x^*)$, $\bar{Y} \in F_u(\bar{x})$ and $\hat{Y} \in F_u(\hat{x})$ satisfy
\begin{equation}\label{lowerboundagain}
|\bar{Y} - \hat{Y}| \geq \max\left\{1, \left(\frac{2M}{\delta} \right)^2, \left(\frac{2M}{C_2} \right)^2 \right\}|\bar{x} - \hat{x}|,
\end{equation}
then there exist positive constants $C_1 = C_1(C_0, \tilde{C}, M)$ and $\alpha = \alpha(n,q)$ such that 
$$|\bar{Y} - \hat{Y}| \leq C_1 |\bar{x} - \hat{x}|^{\alpha}.$$
Moreover, 
$$u\in C^{1, \alpha}(B_{\delta/2}(x^*)).$$
The H\"older exponent can be obtained explicitly as
\begin{equation}\label{alphavalue}
\alpha = \frac{\frac{n}{2q} - \frac{n-1}{2}}{1 + \frac{3}{2}(n-1) + \frac{n}{2q}}.
\end{equation}
\end{Theorem}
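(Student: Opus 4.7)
The plan is to combine Theorem \ref{lowerboundtheorem} with the two measure conditions \eqref{localcondition} and \eqref{growthcondition} in the classical way, and then upgrade the Hölder estimate on the refractor-normal map $F_u$ to $C^{1,\alpha}$ regularity of $u$ by using that $F_u(x)$ determines $Du(x)$ through the smooth map $D_x\phi$.

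First I would set up a dichotomy. Given $\bar x,\hat x\in B_{\delta/2}(x^*)$ with $\bar Y\in F_u(\bar x)$ and $\hat Y\in F_u(\hat x)$, either the lower bound \eqref{lowerboundagain} fails — in which case $|\bar Y-\hat Y|\le K|\bar x-\hat x|$ for the structural constant $K=\max\{1,(2M/\delta)^2,(2M/C_2)^2\}$, and there is nothing left to prove — or it holds, and we apply Theorem \ref{lowerboundtheorem} at the point $x^*$. This yields a point $x_0\in[\bar x,\hat x]$ such that, with $X_0^*=(x_0,u(x_0))$, $\mu=|\bar Y-\hat Y|^{3/2}|\bar x-\hat x|^{1/2}$, and $\eta=M(|\bar x-\hat x|/|\bar Y-\hat Y|)^{1/2}$,
\[
\mathcal{N}_{\mu}\!\left(\left\{Y(\lambda)\in[\bar Y,\hat Y]_{X_0^*}:\lambda\in\bigl[\tfrac14,\tfrac34\bigr]\right\}\right)\cap\Sigma\;\subset\;F_u(B_{\eta}(x_0)).
\]
Shrinking $\delta$ if necessary ensures $\mu$ is as small as \eqref{localcondition} requires and $B_{\eta}(x_0)\subset\subset\Omega$.

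Next I would apply $\sigma$ to both sides. The lower bound \eqref{localcondition} applied to the tube on the left and the growth estimate \eqref{growthcondition} applied to $F_u(B_\eta(x_0))$ on the right give
\[
\tilde C\,\mu^{n-1}\,|\bar Y-\hat Y|\;\le\;\sigma(F_u(B_{\eta}(x_0)))\;\le\;C_0\,\eta^{n/q}.
\]
Substituting the formulas for $\mu$ and $\eta$ and collecting powers of $|\bar Y-\hat Y|$ on the left and of $|\bar x-\hat x|$ on the right yields
\[
\tilde C\,|\bar Y-\hat Y|^{\,\frac{3n-1}{2}+\frac{n}{2q}}\;\le\;C_0 M^{n/q}\,|\bar x-\hat x|^{\,\frac{n}{2q}-\frac{n-1}{2}}.
\]
Because $1\le q<n/(n-1)$, the exponent on the right is strictly positive, and solving for $|\bar Y-\hat Y|$ produces $|\bar Y-\hat Y|\le C_1|\bar x-\hat x|^{\alpha}$ with $C_1=(C_0 M^{n/q}/\tilde C)^{1/A}$ and $\alpha$ exactly as in \eqref{alphavalue}. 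Combining with the trivial case of the dichotomy gives the Hölder estimate on $F_u$ throughout $B_{\delta/2}(x^*)$; in particular $F_u$ is single-valued there.

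Finally I would deduce $u\in C^{1,\alpha}(B_{\delta/2}(x^*))$. Since $u$ is Lipschitz by \eqref{lipschitzestimateforrefractor}, it is differentiable a.e., and at any point of differentiability $x$ the formula \eqref{Fact} gives $Du(x)=D_x\phi(x,F_u(x),(x,u(x)))$. Because $F_u$ is now a single-valued Hölder map with exponent $\alpha$, because $u$ itself is Lipschitz, and because $D_x\phi$ is smooth in all its arguments on $\Omega\times\Sigma\times\mathcal C_\Omega$ with the uniform derivative bounds \eqref{mixedhessian} (plus the analogous bound in $X$), the right-hand side is Hölder continuous in $x$ with exponent $\alpha$ on the dense set of differentiability points. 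It extends continuously to $B_{\delta/2}(x^*)$, giving $u\in C^{1,\alpha}$.

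The main obstacle is the bookkeeping in the tube/ball comparison: one must verify that, under \eqref{lowerboundagain}, the parameter $\mu$ is indeed in the smallness range allowed by \eqref{localcondition} and $B_\eta(x_0)\subset\subset\Omega$ is contained in the regularity neighborhood of $X^*$, so that Theorem \ref{lowerboundtheorem} and both growth conditions apply simultaneously. Once those ranges are arranged by choosing $\delta$ small (depending on $X^*$, the diameter of $\Sigma$, and the neighborhoods $\mathcal U_{X_0}$), the rest is an algebraic manipulation of exponents that produces precisely the value \eqref{alphavalue}.
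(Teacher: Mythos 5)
Your proposal follows the same strategy as the paper's proof: you invoke Theorem \ref{lowerboundtheorem} to place the tube around $[\bar Y,\hat Y]_{X_0^*}$ inside $F_u(B_\eta(x_0))$, apply $\sigma$ to both sides together with \eqref{localcondition} and \eqref{growthcondition}, and then solve for $|\bar Y - \hat Y|$ to obtain the exponent \eqref{alphavalue}; the algebra checks out, since $1+\tfrac{3(n-1)}{2}=\tfrac{3n-1}{2}$. The only divergence is in the last step: the paper establishes differentiability of $u$ at every point of $B_{\delta/2}(x^*)$ directly, by a two-sided difference-quotient argument that uses the concavity of $\phi$ for the upper bound and the just-proved continuity of $F_u$ for the lower bound, whereas you shortcut via Rademacher (Lipschitz $\Rightarrow$ a.e.\ differentiable) plus the standard fact that a Lipschitz function whose a.e.\ gradient agrees with a continuous function is $C^1$; this is valid and slightly shorter, at the cost of invoking a real-analysis lemma the paper avoids. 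Both routes then yield $u\in C^{1,\alpha}$ from the $C^\alpha$ bound on $x\mapsto D_x\phi(x,F_u(x),(x,u(x)))$ using the uniform derivative bounds on $\phi$.
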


\begin{proof} In Theorem \ref{lowerboundtheorem} we showed that under the assumption \eqref{lowerboundagain} (which is the same as \eqref{lowerbound} from Theorem \ref{lowerboundtheorem}) there exists $x_0 \in [\bar{x}, \hat{x}]$ such that if $X_0^* = (x_0, u(x_0))$, then we have the inclusion \eqref{tube}
$$N_{\mu} \left(\left\{Y(\lambda) \in [\bar{Y}, \hat{Y}]_{X_0^*} : \lambda \in \left[\frac{1}{4}, \frac{3}{4} \right] \right\} \right) \cap \Sigma \subset F_u(B_{\eta}(x_0)),$$
where $\mu = |\bar{Y} - \hat{Y}|^{\frac{3}{2}}|\bar{x} - \hat{x}|^{\frac{1}{2}}$, and $\eta = M \left(\frac{|\bar{x} - \hat{x}|}{|\bar{Y} - \hat{Y}|} \right)^{\frac{1}{2}}= \frac{M\mu}{|\bar{Y} - \hat{Y}|^2}$. It now follows from \eqref{localcondition} and \eqref{growthcondition} that
\begin{align*}
\tilde{C} \mu^{n-1} |\bar{Y} - \hat{Y}| & \leq \sigma\left[\mathcal{N}_{\mu} \left(\left\{ Y(\lambda) \in [\bar{Y}, \hat{Y}]_Z : \lambda \in \left[\frac{1}{4}, \frac{3}{4} \right] \right\}\right) \cap \Sigma \right] \\
& \leq \sigma[F_u(B_{\eta}(x_0))] \leq C_0 \eta^{\frac{n}{q}},
\end{align*}
which immediately implies $|\bar{Y} - \hat{Y}| \leq C_1 |\bar{x} - \hat{x}|^{\alpha}$. The value of $\alpha$ shown in \eqref{alphavalue} can be obtained using the definitions of $\mu$ and $\eta$. 

Moreover, it follows that $F_u(x)$ is single-valued for all $x \in B_{\delta/2}(x^*)$. Take $\bar{x} \in B_{\delta/2}(x^*)$. We first show that $u$ is differentiable at $\bar{x}$ and $D_x u(\bar{x}) = D_x \phi(\bar{x}, \bar{Y}, \bar{X})$, where $\bar{X} = (\bar{x}, u(\bar{x}))$ and $\bar{Y} = F_u(\bar{x})$. Indeed, for $i = 1, \ldots, n$ and $h$ sufficiently small, we obtain from \eqref{hessian} and the definition of parallel refractor that
\begin{align*}
D_i \phi(\bar{x}, \bar{Y}, \bar{X}) - \left( \frac{u(\bar{x} + h e_i) - u(\bar{x})}{h} \right) & \leq D_i \phi(\bar{x}, \bar{Y}, \bar{X}) - \left( \frac{\phi(\bar{x} + h e_i, \bar{Y}, \bar{X}) - \phi(\bar{x}, \bar{Y}, \bar{X})}{h} \right) \\
& = D_i \phi(\bar{x}, \bar{Y}, \bar{X}) - D_i \phi(\bar{x} + \tilde{h} e_i, \bar{Y}, \bar{X}) \\
& = -D_{ii} \phi(\bar{x} + \hat{h}e_i, \bar{Y}, \bar{X})\tilde{h}\leq C \tilde{h} \leq Ch
\end{align*}
for $0 \leq \hat{h} \leq \tilde{h} \leq h$. 
For the reverse inequality, let $\hat{x} = \bar{x} + h e_i$, $\hat{X} = (\hat{x}, u(\hat{x}))$ and $\hat{Y} = F_u(\hat{x})$. Once again, by the definition of parallel refractor,
\begin{align*}
\left(\frac{u(\hat{x}) - u(\bar{x})}{h} \right) - D_i \phi(\bar{x}, \bar{Y}, \bar{X}) & \leq \left( \frac{\phi(\hat{x}, \hat{Y}, \hat{X}) - \phi(\bar{x}, \hat{X}, \hat{Y})}{h} \right) - D_i \phi(\bar{x}, \bar{Y}, \bar{X}) \\
& = D_i\phi(\tilde{x}, \hat{Y}, \hat{X}) - D_i \phi(\bar{x}, \bar{Y}, \bar{X})
\end{align*}
for some $\tilde{x} \in [\bar{x}, \hat{x}]$. By the first part of the proof, we have also $F_u(x)$ is a continuous function of $x$ and so $\hat{Y} \rightarrow \bar{Y}$ as $\hat{x} \rightarrow \bar{x}$. Thus, by letting $h \rightarrow 0$, we obtain the differentiability of $u$ and the desired formula for $Du$.

Finally, we show that $u \in C^{1, \alpha}(B_{\delta/2}(x^*))$. As a matter of fact, for $\bar{x}, \hat{x} \in B_{\delta/2}(x^*)$, we have
\begin{eqnarray*}
&&|D u(\bar{x}) - D u(\hat{x})| = |D_x \phi(\bar{x}, \bar{Y}, \bar{X}) - D_x \phi(\hat{x}, \hat{Y}, \hat{X})| \leq\\
&& \leq |D_x \phi(\bar{x}, \bar{Y}, \bar{X}) - D_x \phi(\hat{x}, \bar{Y}, \bar{X})| + |D_x \phi(\hat{x}, \bar{Y}, \bar{X}) - D_x \phi(\hat{x}, \hat{Y}, \bar{X})| +\\
&& + |D_x \phi(\hat{x}, \hat{Y}, \bar{X}) - D_x \phi(\hat{x}, \hat{Y}, \hat{X})| \leq\\
&& \leq C\left(|\bar{x} - \hat{x}| + |\bar{Y} - \hat{Y}| + |\bar{X} - \hat{X}| \right) \leq C |\bar{x} - \hat{x}|^{\alpha}.
\end{eqnarray*}
Here we have exploited the Lipschitz and the Hessian estimates for $\phi$, together with the facts that $|\bar{X} - \hat{X}| \leq C|\bar{x} - \hat{x}|$ and $|\bar{Y} - \hat{Y}| \leq C \max \left\{|\bar{x} - \hat{x}|, |\bar{x} - \hat{x}|^{\alpha} \right\}$.
\end{proof}

\setcounter{equation}{0}
\section{Appendix}\label{app}

The purpose of this appendix is to show existence of parallel refractors assuming an appropriate configuration and location of the target $\Sigma$. 
In fact, we will show existence of refractors for targets located within a slab of certain dimensions and so that \eqref{compatibilitycondition} holds, see Theorem \ref{thm:existencediscretecase} below. Therefore, the $C^{1,\alpha}$ estimates proved in the main body of this paper are applicable to actual refractors.
A main difference with the existence result in \cite{gutierrez-tournier:parallelrefractor} is that the geometry is now given by hyperboloids, then some estimates are different and require explanation. 
We show explicit estimates for the parameters in the configuration that yield existence of parallel refractors. In particular, we seek refractors satisfying the energy conservation condition. 

Let us first recall some notions. 
Given a refractor $u$ and a point $Y\in\Sigma$, 
the tracing mapping of $u$ is defined by
$$\mathcal T_u(Y)=\left\{x\in\Omega\,:\,\text{$\H(Y,b)$ supports $u$ from below at $x$ for some $b>0$}\right\}.$$
If $E\subset\Sigma$, we put
$$\mathcal T_u(E)=\bigcup_{Y\in E}{\mathcal T_u(Y)}.$$
Given a nonnegative $f\in L^1(\Omega)$, and assuming the visibility condition \eqref{eq:visibilitycondition} below, the refractor mapping induces a Borel measure, the refractor measure, given by 
$$\mathcal M_{u}(E)=\int_{\mathcal T_u(E)}{f(x)\,dx};$$
this is proved as in \cite[comment after Definition 2.3]{gutierrez-tournier:parallelrefractor}.
The function $f$ represents the intensity of the radiation emanating from $\Omega$. The radiation intensity to be received at $\Sigma$ is given by a Radon measure $\mu$. By assuming the energy conservation condition $\int_\Omega f(x)\,dx=\mu(\Omega)$, we will prove the existence of a parallel refractor $u:\Omega\to [0,\tau_0]$ (in the sense of Definition \ref{def:refractor}) for which the compatibility conditions \eqref{compatibilitycondition} are satisfied, and such that
$$\mathcal M_{u}(E)= \mu(E)\qquad \forall E\subseteq \Sigma.$$
The main step is to prove this in the discrete case, i.e., when $\mu=\sum_{i=1}^Na_i\delta_{Y_i}$ with $a_i>0$ and $Y_i\in\Sigma$. Once this is established, existence when $\mu$ is a general Radon measure follows by an approximation argument, see, e.g., \cite{{gutierrez-tournier:parallelrefractor}}.

\subsection{Geometric configuration of the target for existence of refractors}
Let us fix $Y_i=(y_i,y^i_{n+1})\in \Sigma$. Given $\mathbf b=(b_1,\cdots ,b_N)$ with $$0<b_i<\kappa\,y^i_{n+1}- \sqrt{(y^i_{n+1})^2+\Delta^2},\qquad 1\leq i\leq N,$$ 
we let
\[
u_{\mathbf b}(x)=\max_{1\leq i\leq N}\phi_{Y_i,b_i}(x)\qquad x\in \Omega.
\]
We will denote $M_{b}(Y_i)=M_{u_{\mathbf b}}(Y_i)$.

{\bf Step 1.}
We want to choose $0<\bar b_1\leq \kappa\,y^1_{n+1}- \sqrt{(y^1_{n+1})^2+\Delta^2}$ such that if $b_i=\kappa\,y^i_{n+1}- \sqrt{(y^i_{n+1})^2+\Delta^2}$ for $2\leq i\leq N$, then 
\[
\max_{2\leq i\leq N}\phi_{Y_i,b_i}(x)\leq
\phi_{Y_1,\bar{b_1}}(x)\qquad x\in \Omega;
\] 
that is, we will choose $\bar b_1$ such that 
\begin{equation}\label{eq:firstchoiceofbarb1}
\phi_{Y_1,\bar{b_1}}(x)\geq \phi_{Y_i,b_i}(x) \qquad \text{for all $x\in \Omega$ and $2\leq i\leq N$.} 
\end{equation}
We write 
\[
\varphi(s)=\dfrac{\kappa\,s}{\kappa^2-1}+\sqrt{\left( \dfrac{s}{\kappa^2-1}\right)^2+\dfrac{\Delta^2}{\kappa^2-1}}.
\]
We have 
\[
\min_{x\in \Omega}\phi_{Y_1,\bar{b_1}}(x)
=
y^1_{n+1}-\dfrac{\kappa\,\bar b_1}{\kappa^2-1}-\sqrt{\left(\dfrac{\bar b_1}{\kappa^2-1}\right)^2 + \dfrac{|x_1-y_1|^2}{\kappa^2-1}}
\]
for some $x_1\in \bar \Omega$. So
\[
\min_{x\in \Omega}\phi_{Y_1,\bar{b_1}}(x)
\geq 
y^1_{n+1}-\dfrac{\kappa\,\bar b_1}{\kappa^2-1}-\sqrt{\left(\dfrac{\bar b_1}{\kappa^2-1}\right)^2 + \dfrac{\Delta^2}{\kappa^2-1}}
=
y^1_{n+1}-\varphi\left(\bar b_1\right).
\]
On the other hand,
\begin{align*}
\phi_{Y_i,b_i}(x)&\leq \phi_{Y_i,b_i}(y_i)=y^i_{n+1}-\dfrac{b_i}{\kappa-1}\\
&=
y^i_{n+1}-\dfrac{1}{\kappa-1}\left(\kappa\,y^i_{n+1}- \sqrt{(y^i_{n+1})^2+\Delta^2} \right)\\
&=
\dfrac{1}{\kappa-1}\left(-y^i_{n+1}+ \sqrt{(y^i_{n+1})^2+\Delta^2} \right)
,\qquad 2\leq i\leq N;\forall x\in \Omega.
\end{align*}
So \eqref{eq:firstchoiceofbarb1} holds if we choose $\bar b_1>0$ such that 
\[
y^1_{n+1}-\varphi\left(\bar b_1\right)\geq \dfrac{1}{\kappa-1}\left(-y^i_{n+1}+ \sqrt{(y^i_{n+1})^2+\Delta^2} \right)\qquad 2\leq i\leq N, 
\]
so
\[
y^1_{n+1}-\varphi\left(\bar b_1\right)\geq \max_{2\leq i\leq N}\dfrac{1}{\kappa-1}\left(-y^i_{n+1}+ \sqrt{(y^i_{n+1})^2+\Delta^2} \right).
\]
This is equivalent to choose $\bar b_1$ such that
\begin{equation}\label{eq:inequalityforvarphibarb1}
\varphi\left(\bar b_1\right)\leq y^1_{n+1} +\min_{2\leq i \leq N}\left\{ \dfrac{1}{\kappa-1}\left(y^i_{n+1}- \sqrt{(y^i_{n+1})^2+\Delta^2} \right)\right\}:=y^1_{n+1} +m:=m^*;
\end{equation}
notice that $m<0$, and so $y^1_{n+1}>m^*$.
Now the function $\varphi$ is strictly increasing in $[0,+\infty)$ and $\varphi(0)=\dfrac{\Delta}{\sqrt{\kappa^2-1}}$, so to get some $\bar b_1>0$ satisfying \eqref{eq:inequalityforvarphibarb1}, we need that 
\begin{equation}\label{eq:lowerboundforM}
\dfrac{\Delta}{\sqrt{\kappa^2-1}}<m^*.
\end{equation}
It is easy to see by calculation that the inverse function of $\varphi$ is 
\[
\varphi^{-1}(y)=\kappa\,y-\sqrt{y^2+\Delta^2};\qquad \varphi^{-1}\left(\dfrac{\Delta}{\sqrt{\kappa^2-1}} \right)=0;
\]
$\varphi^{-1}:\left[\dfrac{\Delta}{\sqrt{\kappa^2-1}},+\infty\right)\to \R^+$.
Since $y^i_{n+1}\geq \tau_1$ for $1\leq i\leq N$, we have
\begin{align}
y^1_{n+1} +\dfrac{1}{\kappa-1}\left(y^i_{n+1}- \sqrt{(y^i_{n+1})^2+\Delta^2} \right)
&\geq
\tau_1 +\dfrac{1}{\kappa-1}\left(\tau_1- \sqrt{\tau_1^2+\Delta^2} \right)\notag\\
&=
\dfrac{1}{\kappa-1}\left( \kappa\, \tau_1 - \sqrt{\tau_1^2+\Delta^2} \right)=\dfrac{1}{\kappa-1}\varphi^{-1}(\tau_1)\label{eq:lowerestimateofM}\\
&>\dfrac{\Delta}{\sqrt{\kappa^2-1}}\quad\text{by choosing $\tau_1$ sufficiently large},\notag
\end{align}
so \eqref{eq:lowerboundforM} holds.
In addition, $\bar b_1$ must satisfy that $0<\bar b_1\leq \kappa\,y^1_{n+1}- \sqrt{(y^1_{n+1})^2+\Delta^2}$;
so we pick $\bar b_1$ as large as possible in this interval and satisfying \eqref{eq:inequalityforvarphibarb1} (notice that from \eqref{eq:lowerboundforM}, \eqref{eq:lowerboundforyn+1} always holds for all $b>0$ sufficiently small).
That is, we define
\begin{equation}\label{eq:definitionofbarb1}
\bar b_1=\max{ \left\{b:0<b\leq \kappa\,y^1_{n+1}- \sqrt{(y^1_{n+1})^2+\Delta^2}\text{ and }\varphi(b)\leq m^*\right\}},
\end{equation}
with $m^*$ given in \eqref{eq:inequalityforvarphibarb1}.

We then have 
\begin{align}\label{eq:formulaforb1bar}
 \bar b_1&=\min \left\{\varphi^{-1}(m^*),\kappa\,y^1_{n+1}- \sqrt{(y^1_{n+1})^2+\Delta^2}  \right\}\notag\\
 &=
\min \left\{\kappa\,m^*- \sqrt{(m^*)^2+\Delta^2} ,\kappa\,y^1_{n+1}- \sqrt{(y^1_{n+1})^2+\Delta^2}  \right\}\notag\\
&=\kappa\,m^*- \sqrt{(m^*)^2+\Delta^2}=\varphi^{-1}(m^*),
\end{align}
since $\varphi^{-1}$ is increasing and $y^1_{n+1}> m^*$.

With the $\bar b_1$ already chosen by \eqref{eq:definitionofbarb1}, let 
\begin{eqnarray}\label{def:setW}
W=\left\{(b_2,\cdots,b_N):\text{$0<b_i\leq \kappa\,y^i_{n+1}- \sqrt{(y^i_{n+1})^2+\Delta^2}$ with}\right.&& \\
\left.\mathcal M_{(\bar b_1,b_2,\cdots ,b_N)}(Y_i)\leq a_i \text{ for } 2\leq i \leq N \right\}.&& \nonumber
\end{eqnarray}
The inequality \eqref{eq:firstchoiceofbarb1} implies that the set $W\neq \emptyset$,
since $\mathcal M_{(\bar b_1,b_2,\cdots ,b_N)}(Y_i)=0$ for $b_i=\kappa\,y^i_{n+1}- \sqrt{(y^i_{n+1})^2+\Delta^2}$ for $2\leq i\leq N$.

{\bf Step 2}.
We prove that the vectors in the set $W$ are bounded below by a positive constant depending only on the constant $\tau_1$ in \eqref{eq:lowerboundforyn+1} (and \eqref{eq:lowerestimateofM}), the constant $\tau_2$ concerning the location of the target, $\Delta$, and $\kappa$. That is, we prove that if $(b_2,\cdots ,b_N)\in W$,  then $b_i\geq \delta>0$, for all $2\leq i\leq N$ with $\delta$ to be calculated; see \eqref{eq:definitionofdelta}.
Suppose that for some $(b_2,\cdots ,b_N)\in W$ there is $2\leq j_0\leq N$ such that $b_{j_0}<\delta$.
We shall prove that this implies that
\begin{equation}\label{eq:inequlitybetweenbj0andbarb1}
\phi_{Y_{j_0},b_{j_0}}(x)\geq \phi_{Y_1,\bar b_1}(x) \qquad \forall x\in \Omega,
\end{equation}
which implies that $\mathcal M_{(\bar b_1,b_2,\cdots ,b_N)}(Y_1)=0$ contradicting the energy conservation condition. We will prove that 
\begin{equation}\label{eq:maxj0biggerthanmaxbarb1}
\min_{x\in \Omega}\phi_{Y_{j_0},b_{j_0}}(x)\geq \max_{x\in \Omega}\phi_{Y_1,\bar b_1}(x)
\end{equation}
which clearly implies \eqref{eq:inequlitybetweenbj0andbarb1}.
We have for some $x_1\in \bar \Omega$
\begin{align*}
\min_{x\in \Omega}\phi_{Y_{j_0},b_{j_0}}(x)&=\phi_{Y_{j_0},b_{j_0}}(x_1)=
y^{j_0}_{n+1}-\dfrac{\kappa\,b_{j_0}}{\kappa^2-1}-\sqrt{\left(\dfrac{b_{j_0}}{\kappa^2-1}\right)^2 + \dfrac{|x_1-y_{j_0}|^2}{\kappa^2-1}}\\
&\geq 
y^{j_0}_{n+1}-\dfrac{\kappa\,b_{j_0}}{\kappa^2-1}-\sqrt{\left(\dfrac{b_{j_0}}{\kappa^2-1}\right)^2 + \dfrac{\Delta^2}{\kappa^2-1}}.
\end{align*}
On the other hand,
\[
\phi_{Y_1,\bar b_1}(y_1)=y^1_{n+1}-\dfrac{\bar b_1}{\kappa-1}\geq \max_{x\in \Omega}\phi_{Y_1,\bar b_1}(x).
\]
So to prove \eqref{eq:maxj0biggerthanmaxbarb1} it is enough to show that
\[
y^{j_0}_{n+1}-\dfrac{\kappa\,b_{j_0}}{\kappa^2-1}-\sqrt{\left(\dfrac{b_{j_0}}{\kappa^2-1}\right)^2 + \dfrac{\Delta^2}{\kappa^2-1}}
\geq
y^1_{n+1}-\dfrac{\bar b_1}{\kappa-1}
\]
if $b_{j_0}<\delta$.
Since $\varphi$ is strictly increasing, if we choose $\delta$ with 
\[
y^{j_0}_{n+1}-\varphi\left(\delta \right)
\geq
y^1_{n+1}-\dfrac{\bar b_1}{\kappa-1}
\]
then
\begin{equation}\label{eq:inequalityfordeltaandbj0}
y^{j_0}_{n+1}-\varphi\left(b_{j_0} \right)\geq y^{j_0}_{n+1}-\varphi\left(\delta \right)\geq
y^1_{n+1}-\dfrac{\bar b_1}{\kappa-1}.
\end{equation}
So we need to choose $\delta>0$ such that 
\[
\dfrac{\bar b_1}{\kappa-1}\geq y^1_{n+1}-y^{j_0}_{n+1}
+\varphi\left(\delta \right).
\]
We have $\varphi(0)=\dfrac{\Delta}{\sqrt{\kappa^2-1}}$, so to find $\delta>0$ satisfying the last inequality, we need to have 
\[
\dfrac{\bar b_1}{\kappa-1}-y^1_{n+1}+y^{j_0}_{n+1}>\dfrac{\Delta}{\sqrt{\kappa^2-1}}.\]
We have from \eqref{eq:formulaforb1bar}
\begin{align*}
&\dfrac{\bar b_1}{\kappa-1}-y^1_{n+1}+y^{j_0}_{n+1}=\dfrac{1}{\kappa-1}\,\varphi^{-1}(M)-y^1_{n+1}+y^{j_0}_{n+1}\\
&\geq 
\dfrac{1}{\kappa-1}\varphi^{-1}\left(\dfrac{1}{\kappa-1} \varphi^{-1}(\tau_1)\right)-y^1_{n+1}+y^{j_0}_{n+1}\qquad\text{(from \eqref{eq:lowerestimateofM} since $\varphi^{-1}$ is increasing)}\\
&\geq 
\dfrac{1}{\kappa-1}\varphi^{-1}\left(\dfrac{1}{\kappa-1} \varphi^{-1}(\tau_1)\right)-\tau_2+\tau_1\qquad \text{(since $\tau_1\leq y^j_{n+1}\leq \tau_2$ for $1\leq j\leq N$)}.
\end{align*}
If we choose $\tau_1,\tau_2$ sufficiently large ($\tau_1$ satisfying also \eqref{eq:lowerboundforyn+1} and \eqref{eq:lowerestimateofM}) and satisfying
\begin{equation}\label{eq:conditionforC1andC2}
\dfrac{1}{\kappa-1}\varphi^{-1}\left(\dfrac{1}{\kappa-1} \varphi^{-1}(\tau_1)\right)-\tau_2+\tau_1>
\dfrac{\Delta}{\sqrt{\kappa^2-1}}
\end{equation} 
(notice that $w=\tau_2-\tau_1$ is the width of the slab containing the target, and \eqref{eq:conditionforC1andC2})
and choose $\delta$ with 
\[
\varphi(\delta)=\dfrac{1}{\kappa-1}\varphi^{-1}\left(\dfrac{1}{\kappa-1} \varphi^{-1}(\tau_1)\right)-\tau_2+\tau_1,
\]
then \eqref{eq:inequalityfordeltaandbj0} follows.
That is,
\begin{equation}\label{eq:definitionofdelta}
\delta=\varphi^{-1}\left(\dfrac{1}{\kappa-1}\varphi^{-1}\left(\dfrac{1}{\kappa-1} \varphi^{-1}(\tau_1)\right)-w \right).
\end{equation}

Therefore all $b$'s in $W$ are bounded below by $\delta$.


In other words, given $w$, we can pick $\tau_1$ sufficiently large so that \eqref{eq:conditionforC1andC2} is satisfied.
We have then proved that 
\[
W\subset [\delta,\varphi^{-1}(\tau_1+w)]^{N-1}.
\]

{\bf Step 3}.
We shall prove the following upper bound for each refractor
\[
u_{\mathbf b}(x)=\max_{1\leq i\leq N}\phi_{Y_i,b_i}(x)\qquad x\in \Omega,
\]
when $b_1=\bar b_1$ from Step 1, and $(b_2,\cdots ,b_N)\in W$: 
\begin{equation}\label{eq:upperboundforub}
u_{\mathbf b}(x)\leq \tau_0:=\tau_1+w-\dfrac{\delta}{\kappa-1}
\end{equation}
with
\begin{equation}\label{eq:conditiononC1forregularity}
\tau_1\geq \max \left\{ \kappa \,\tau_0,\tau_0+\dfrac{\kappa\,\Delta}{\kappa-1}\right\},
\end{equation}
where $w=\tau_2-\tau_1$, for all $\tau_1$ is sufficiently large where $\delta$ is given in \eqref{eq:definitionofdelta}.

First notice that from \eqref{eq:maximumofphiYb}
\[
\phi_{Y_i,b_i}(x)\leq y_{n+1}^i-\dfrac{b_i}{\kappa-1}\leq \tau_2-\dfrac{\delta}{\kappa-1}
=
\tau_1+w-\dfrac{\delta}{\kappa-1}
\]
for $i=2,\cdots ,N$; and
\[
\phi_{Y_1,b_1}(x)\leq y_{n+1}^1-\dfrac{\bar b_1}{\kappa-1}\leq \tau_2-\dfrac{\bar b_1}{\kappa-1}.
\]
If we prove that 
\[
\bar b_1\geq \delta,
\]
then \eqref{eq:upperboundforub} holds.
In fact, from \eqref{eq:lowerestimateofM} and \eqref{eq:formulaforb1bar}
\[
\bar b_1=\varphi^{-1}(m^*)\geq \varphi^{-1}\left( \dfrac{1}{\kappa-1}\varphi^{-1}(\tau_1)\right):=L 
\]
since $\varphi^{-1}$ is increasing. 
Now
$\varphi(\delta)=\dfrac{1}{\kappa-1}\,L-w$, so $L\geq \delta$ is equivalent to 
$\varphi(\delta)-\dfrac{\delta}{\kappa-1}+w\geq 0$ which holds true since 
$\varphi(\delta)-\dfrac{\delta}{\kappa-1}>0$ and $w\geq 0$.
Notice that to obtain the bound \eqref{eq:upperboundforub} we must take $\tau_1$ sufficiently large satisfying the inequalities \eqref{eq:lowerboundforyn+1}, \eqref{eq:lowerestimateofM},
and \eqref{eq:conditionforC1andC2}.

We shall prove now that we can take $\tau_1$ even larger so that \eqref{eq:conditiononC1forregularity} holds.
We first show we can choose $\tau_1$ large so that 
\[
\tau_0\leq \dfrac{1}{\kappa}\,\tau_1.
\]
This inequality is equivalent to
\begin{equation}\label{eq:conditiononC1forregularitybis}
\dfrac{w}{\tau_1}+\left(1-\dfrac{1}{\kappa}\right)\leq \dfrac{\delta}{(\kappa-1)\,\tau_1}
=\dfrac{1}{(\kappa-1)\,\tau_1}\varphi^{-1}\left(\dfrac{1}{\kappa-1}\varphi^{-1}\left(\dfrac{1}{\kappa-1} \varphi^{-1}(\tau_1)\right)-w \right)=\mathcal L(\tau_1).
\end{equation}
Writing $\varphi^{-1}(\alpha)=\alpha\,\left(\kappa-\sqrt{1+\left(\dfrac{\Delta}{\alpha}\right)^2} \right)$, $\Lambda(\tau_1)=\dfrac{1}{\kappa-1}\varphi^{-1}\left(\dfrac{1}{\kappa-1} \varphi^{-1}(\tau_1)\right)$, and noticing that $\lim_{\tau_1\to +\infty}\varphi^{-1}(\tau_1)\to +\infty$ and $\lim_{\tau_1\to +\infty}\Lambda(\tau_1)-w\to +\infty$, we see that 
$\lim_{\tau_1\to +\infty}\mathcal L(\tau_1)=1.$
Hence \eqref{eq:conditiononC1forregularitybis} holds for all $\tau_1$ sufficiently large, since $\kappa>1$.
To show that we can choose $\tau_1$ large so that 
\[
\tau_1\geq \tau_0+\dfrac{\kappa\,\Delta}{\kappa-1},
\]
it is enough to show, since $\kappa>1$, that we can choose $\tau_1$ large such that
\[
\tau_0+\dfrac{\kappa\,\Delta}{\kappa-1}\leq \dfrac{1}{\kappa}\,\tau_1.
\]
The last inequality is equivalent to 
\[
\dfrac{w}{\tau_1}+\left(1-\dfrac{1}{\kappa}\right)+\dfrac{\kappa\,\Delta}{(\kappa-1)\,\tau_1}\leq \dfrac{\delta}{(\kappa-1)\,\tau_1}
=\mathcal L(\tau_1)
\]
which as before holds true for all $\tau_1$ sufficiently large.

In summary, we can choose $\tau_1$ large depending on $w$, $\kappa$ and $\Delta$ such that \eqref{eq:upperboundforub} and \eqref{eq:conditiononC1forregularity} hold true for any refractor with $b_1=\bar b_1$, and $(b_2,\cdots ,b_N)\in W$.
That is, the graph of the refractor is contained in the cylinder $\mathcal C_
\Omega=\Omega\times [0,\tau_0]$, with $\tau_0=\tau_1+w-\dfrac{\delta}{\kappa-1}$; where $\tau_1$ is large.

Recall once again that assuming the visibility condition \eqref{eq:visibilitycondition} below, $\mathcal M_{u}$ is a Borel measure in $\Sigma$
as in \cite[comment after Definition 2.3]{gutierrez-tournier:parallelrefractor}. 
In addition, the continuity of the refractor measure follows as in \cite[Lemmas 2.4 and 3.2]{gutierrez-tournier:parallelrefractor} implying that the set $W$ in \eqref{def:setW} above is closed.
Then using the argument in the last third of the proof of \cite[Theorem 3.3]{gutierrez-tournier:parallelrefractor} yields the following existence theorem; see Figure \ref{fig:figurefarfield}.

\begin{figure}
  \centering
\includegraphics[width=5in]{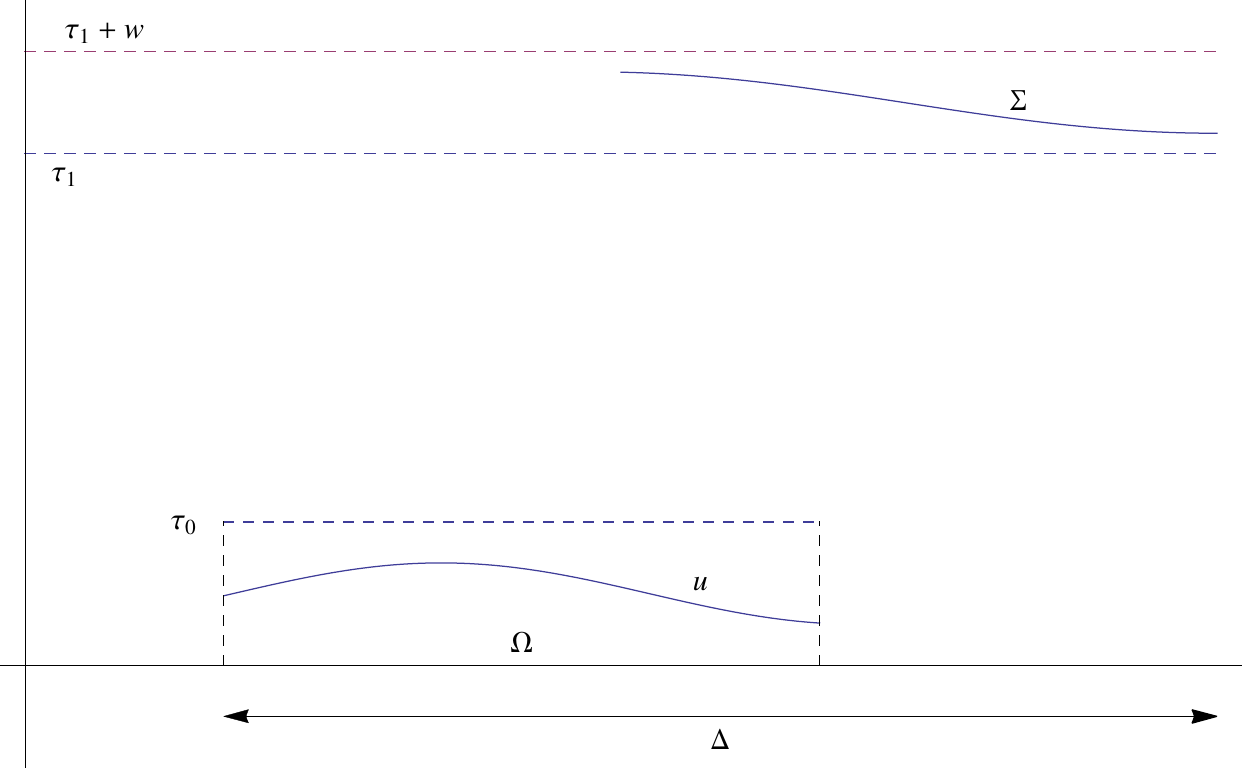}
\caption{Refraction configuration}
\label{fig:figurefarfield}
\end{figure}

\begin{Theorem}\label{thm:existencediscretecase}
Let $w>0$ be fixed. For all $\tau_1$ sufficiently large depending only on $w$, $\kappa$ and $\Delta$, there is $\tau_0$, given in \eqref{eq:upperboundforub} and satisfying \eqref{eq:conditiononC1forregularity}, such that for each target $\Sigma=\{Y_1,\cdots, Y_N\}$ contained in the slab $\{(y,y_{n+1}):\tau_1\leq y_{n+1}\leq \tau_1+w\}$ and satisfying the visibility condition:
\begin{align}\label{eq:visibilitycondition}
&\text{for all $X\in \Omega\times [0,\tau_0]$ and for all $m\in S^{n-1}$,}\\
&\text{the ray $\{X + tm : t > 0\}$ intersects $\Sigma$
in at most one point,}\notag
\end{align} 
there is a parallel refractor $u:\Omega\to [0,\tau_0]$ satisfying 
\begin{equation}\label{eq:energyconservation}
\int_{\mathcal T_u(E)}f(x)\,dx=\mu(E)\qquad \forall E\subset \Sigma,
\end{equation}
where $\mu=\sum_{i=1}^Na_i\delta_{Y_i}$ with $a_i>0$ and the energy conservation condition $\int_\Omega f(x)\,dx=\sum_{i=1}^Na_i$.
\end{Theorem}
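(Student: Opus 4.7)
The plan is to execute the standard Caffarelli–Oliker–Gutiérrez variational scheme, adapted to the hyperboloid setting; Steps 1--3 above have already done the delicate geometric preparatory work, so what remains is to pick the right extremal element of $W$ and read off the equality from inequality.

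First I would verify that $W$ is a non-empty compact subset of $\mathbb{R}^{N-1}$. Non-emptiness is immediate from the calculation just after \eqref{def:setW}: the vector with $b_i=\kappa y^i_{n+1}-\sqrt{(y^i_{n+1})^2+\Delta^2}$ for $i\ge 2$ lies in $W$ because \eqref{eq:firstchoiceofbarb1} forces $\mathcal{M}_{(\bar b_1,b_2,\ldots,b_N)}(Y_i)=0\le a_i$. Boundedness above is built into the definition of $W$, and the uniform lower bound $b_i\ge \delta>0$ is precisely the content of Step 2. Closedness follows from the continuity of the map $\mathbf{b}\mapsto \mathcal{M}_{(\bar b_1,\mathbf{b})}(Y_i)$, obtained verbatim from the hyperboloid analog of \cite[Lemmas~2.4 and 3.2]{gutierrez-tournier:parallelrefractor} (which use only the Lipschitz/Hessian bounds already collected in Section \ref{subsec:derivativeestimates} and the visibility condition \eqref{eq:visibilitycondition}).

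Next I would minimize the continuous functional $F(\mathbf{b})=\sum_{i=2}^{N}b_i$ over the compact set $W$, obtaining a minimizer $\mathbf{b}^{*}=(b_2^{*},\ldots,b_N^{*})$. The key claim is that $\mathcal{M}_{(\bar b_1,\mathbf{b}^{*})}(Y_i)=a_i$ for every $i\ge 2$. Suppose instead $\mathcal{M}_{(\bar b_1,\mathbf{b}^{*})}(Y_{j})<a_{j}$ for some $j$, and replace $b_j^{*}$ by $b_j^{*}-\varepsilon$ to form $\mathbf{b}^{\varepsilon}$. Decreasing $b_j$ makes $\phi_{Y_j,b_j}$ pointwise larger, hence $\mathcal{T}_u(Y_i)$ for $i\ne j$ can only shrink, so $\mathcal{M}_{(\bar b_1,\mathbf{b}^{\varepsilon})}(Y_i)\le \mathcal{M}_{(\bar b_1,\mathbf{b}^{*})}(Y_i)\le a_i$; and by continuity of $\mathcal{M}$ in $\mathbf{b}$, one also preserves $\mathcal{M}_{(\bar b_1,\mathbf{b}^{\varepsilon})}(Y_j)\le a_j$ for $\varepsilon>0$ small. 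Thus $\mathbf{b}^{\varepsilon}\in W$ with $F(\mathbf{b}^{\varepsilon})<F(\mathbf{b}^{*})$, a contradiction. Combining the equalities $\mathcal{M}(Y_i)=a_i$ for $i\ge 2$ with the partition identity $\sum_{i=1}^{N}\mathcal{M}_{u}(Y_i)=\int_{\Omega}f\,dx=\sum_{i=1}^{N}a_i$ (which holds because $\bigcup_i \mathcal{T}_u(Y_i)=\Omega$ up to a null set, under \eqref{eq:visibilitycondition}) yields $\mathcal{M}(Y_1)=a_1$ as well, producing \eqref{eq:energyconservation}.

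Finally, for the resulting $u=u_{(\bar b_1,\mathbf{b}^{*})}$ to qualify as a parallel refractor in the sense of Definition~\ref{def:refractor} and to satisfy the compatibility condition \eqref{compatibilitycondition}, I would invoke Step 3: it shows that $u\le \tau_0=\tau_1+w-\tfrac{\delta}{\kappa-1}$ pointwise on $\Omega$ and that $\tau_1$ can be enlarged (depending only on $w,\kappa,\Delta$) to force $\tau_1\ge \max\{\kappa\tau_0,\,\tau_0+\tfrac{\kappa\Delta}{\kappa-1}\}$. This gives $u\colon\Omega\to[0,\tau_0]$ with graph inside $\mathcal{C}_{\Omega}$, completing the proof in the discrete case; the passage from a sum of Dirac masses to a general Radon measure $\mu$ is then a routine approximation/compactness argument as in \cite{gutierrez-tournier:parallelrefractor}. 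The principal obstacle is the variational/perturbation step: one must genuinely prove the continuity and monotonicity of $\mathbf{b}\mapsto \mathcal{M}_{(\bar b_1,\mathbf{b})}(Y_i)$ in the hyperboloid framework and ensure that the small decrease in $b_j$ does not push $\mathbf{b}^{\varepsilon}$ out of $W$ through the lower bound $\delta$ (here Step 2 is essential, since it guarantees $b_j^{*}>\delta$ strictly, leaving room to perturb).
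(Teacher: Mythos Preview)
Your proposal is correct and follows exactly the scheme the paper intends: Steps~1--3 supply the geometric bounds, and the remaining variational argument you reconstruct (minimize $\sum_{i\ge 2} b_i$ over the compact set $W$, perturb at a strict-inequality index, then recover $\mathcal{M}(Y_1)=a_1$ from energy conservation) is precisely ``the last third of the proof of \cite[Theorem~3.3]{gutierrez-tournier:parallelrefractor}'' to which the paper defers. One small clarification: the lower bound $\delta$ from Step~2 is not a defining constraint of $W$ but a consequence, so its role in the perturbation is only to guarantee $b_j^{*}\ge\delta>0$ (hence $b_j^{*}-\varepsilon>0$ stays in $W$) and, more importantly, to make $W$ compact so the minimizer exists.
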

 
Using the last theorem and proceeding as in the proof of the existence \cite[Theorem 3.4]{gutierrez-tournier:parallelrefractor},
we obtain by discretization that Theorem \ref{thm:existencediscretecase} holds true for an arbitrary Radon measure $\mu$ on a general target $\Sigma$ satisfying \eqref{eq:visibilitycondition} and the energy conservation condition $\int_\Omega f(x)\,dx=\mu(\Sigma)$.

\providecommand{\bysame}{\leavevmode\hbox to3em{\hrulefill}\thinspace}
\providecommand{\MR}{\relax\ifhmode\unskip\space\fi MR }
\providecommand{\MRhref}[2]{%
  \href{http://www.ams.org/mathscinet-getitem?mr=#1}{#2}
}
\providecommand{\href}[2]{#2}

\end{document}